\title[Inside factorial monoids]{Inside factorial monoids and the cale monoid of a single Diophantine equation}
\author{Pedro A. García-Sánchez}
\address{Departamento de Álgebra and IEMath-GR, Universidad de Granada, E-18071 Granada, España}
\email{pedro@ugr.es}
\author{Ulrich Krause}
\address{Universität Bremen, Fachberich Mathematik/Informatik, D-28359 Bremen, Germany}
\email{ukrause@uni-bremen.de}
\author{David Llena}
\address{Departamento de Matem\'{a}ticas, Universidad de Almeria, E-04120 Almeria,  Espa\~na}
\email{dllena@ual.es}
\thanks{The first and third authors are supported by the project MTM2017-84890-P, which is funded by Ministerio de Econom\'{\i}a, Industria y Competitividad and Fondo Europeo de Desarrollo Regional FEDER, and by the Junta de Andaluc\'{\i}a Grant Number FQM-343. Part of this work was done while the second author visited the Universities of Almería and Granada supported by the project MTM2014-55367-P}
\date{\today}
\keywords{Cale monoid, atomic monoid, root-closed monoid, inside factorial, Diophantine equation, class group, inner class group, Hilbert basis}
\subjclass[2010]{20M14, 20M13, 11D04}
\newtheorem{proposition}{Proposition}
\newtheorem{lemma}{Lemma}
\newtheorem{theorem}{Theorem}
\newtheorem{corollary}{Corollary}
\theoremstyle{remark}
\newtheorem{example}{Example}
\newtheorem{remark}{Remark}
\newcommand{\NN}{\mathbb N}
\DeclareMathOperator{\Ap}{Ap}
\def\peb[#1]{{\left\lfloor #1\right\rfloor}}
\def\pe[#1]{{\left\lceil #1\right\rceil}}
\begin{document}

\maketitle

\begin{abstract}
    We give a structure theorem for inside factorial domains. As an example we study the monoid of nonnegative integer solutions of equations of the form $a_1x_1+\cdots +a_{r-1}x_{r-1}=a_rx_r$, with $a_1,\ldots,a_r$ positive integers. This set is isomorphic to a simplicial full affine semigroup, and thus it can be described in terms of its extremal rays and the Ap\'ery sets with respect to the extremal rays. 
\end{abstract}

\section{Introduction}

The motivation of this manuscript was essentially the extension of Elliott's results on the sets of nonnegative integer solutions of equations of the form $a x+ b y=c z$, with $a,b,c$ positive integers. Elliott was able to give parametric solutions to these equations for $c$ up to 10. The set of solutions of these equations are full affine semigroups, and consequently they can be ``parametrized'' in terms of the Ap\'ery sets of the extremal rays. Simplicial full affine semigroups are a particular instance of Cale monoids, which are always inside factorial. Also every full affine semigroup is isomorphic to a normal affine semigroup. The normal condition is known as root-closed in multiplicative notation.

We see that for root-closed inside factorial monoids, elements can be expressed uniquely as a linear combination of the elements in a basis plus an element in the Ap\'ery set of this basis. Thus basis for inside factorial monoids play the same role as extremal rays in simplicial full affine semigroups. This motivates the extension of the structure theorem for simplicial full affine semigroups to inside factorial monoids. It turns out that root-closed inside factorial monoids are of the form $G\times F$ with $G$ a torsion Abelian group and $F$ a free monoid, endowed with an operation that has a ``carry'' on the first component. 

Cale monoids are a particular case of inside factorial monoids with special bases known as tame bases. For these monoids we are able to determine the class group and the inner class group. We see that the inner class group coincides with the Ap\'ery set of the monoid with respect to a tame base, and actually the Ap\'ery set can be endowed with an operation that makes it isomorphic to the inner class group. 

Going back to the original problem of studying the monoid of nonnegative integer solutions of $a_1x_1+\dots+ a_{r-1}x_r=a_rx_r$, with all $a_i$ positive integers, we particularize the results obtained above and recover some known results for simplicial affine semigroups. For the particular case of $r=3$, we see that the inner class group is cyclic, and we are able to compute it together with the class group. We discuss with some examples the uniqueness achieved by Elliott in the description of the set of solutions for the case $r=3$.

\section{Inside factorial monoids}

Let $M$ be a cancellative and commutative monoid, and let $Q$ be a set of nonunit elements of $M$. We say that $M$ is \emph{inside factorial with base} $Q$ if 
\begin{itemize}
    \item for each element $x\in M$, there exists a positive integer $n(x)$ and $u$ a unit in $M$ such that $n(x)x\in u+\langle Q\rangle$,
    \item if $n(x)x =u+\sum_{q\in Q}\lambda_q q = v + \sum_{q\in Q} \mu_q q$, for some $u$, $v$ units of $M$ and $\lambda_q, \mu_q\in \mathbb N$ (only a finite number of them is nonzero; $\mathbb{N}$ stands for the set of nonnegative integers), then we have $\lambda_q=\mu_q$ for all $q\in Q$. 
\end{itemize}
The set $Q$ is known as a \emph{Cale basis} for $M$. 

If we assume that $M$ is unit free (reduced), then $M$ is an inside factorial monoid with base $Q$ if and only if 
for every $x\in M$, there exist nonnegative integers $n(x)$, $x(q)$, $q\in Q$, such that 
\[
n(x) x=\sum_{q\in Q} x(q) q,
\]
with all $x(q)$ equal to zero except for finitely many $q\in Q$, and the elements in $Q$ are $\mathbb{Q}$-linearly independent.

For a cancellative monoid $M$ we can define the divisibility relation as follows: 
\[a\le_M b \hbox{ if there exists } c\in M \hbox{ such that }a+c=b.\] 

Given a reduced inside factorial monoid $M$, we say that $q\in M$ is a \emph{strong atom} if the only nontrivial divisors of multiples of $q$ are multiples of $q$. That is, if for some $n\in\mathbb Z^+$ and $x \in M$ with $x\le_M nq$, then $x=mq$ for some $m\in \mathbb Z^+$. The set of all strong atoms of $M$ is a Cale base for $M$ (this is a consequence of \cite[Lemma 3.3 (4)]{cale}).

Let $M$ be an inside factorial monoid with base $Q$, we define the \emph{Ap\'ery set} of $M$ with respect to $Q$ as follows
\[
\Ap(M,Q)=M\setminus(Q+M).
\]

\begin{lemma}\label{lem:lambda-lt-1}
Let $M$ be a reduced inside factorial monoid with basis $Q$. Then
\[
\{x\in M \mid \lambda(q,x)<1 \hbox{ for all } q\in Q\}\subseteq \Ap(M,Q),
\]  
where for $x,y\in M$, $\lambda(x,y)=\sup\{\frac{m}n \mid m x\le_M n y\}$.
\end{lemma}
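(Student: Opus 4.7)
My plan is to prove the contrapositive: if $x \notin \Ap(M,Q)$, then there exists some $q \in Q$ with $\lambda(q,x) \ge 1$, which contradicts the hypothesis. I expect the argument to be a direct unfolding of the definitions of $\le_M$, $\lambda$, and $\Ap(M,Q)$, with no real obstacle, so the ``proof'' is essentially a one-liner.

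In detail, suppose $x \notin \Ap(M,Q) = M \setminus (Q+M)$. Then $x \in Q+M$, so by definition we can write $x = q + y$ for some $q \in Q$ and $y \in M$. This exhibits the relation $q \le_M x$ directly from the definition of divisibility in $M$. Feeding this into the definition
\[
\lambda(q,x) = \sup\bigl\{\tfrac{m}{n} \,\big|\, mq \le_M nx\bigr\},
\]
the choice $m = n = 1$ shows that $1$ lies in the set over which the supremum is taken, hence $\lambda(q,x) \ge 1$.

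The main (and only) thing worth checking is that I have the orientation of $\lambda$ right: here $\lambda(q,x)$ measures how many copies of $q$ can be packed into multiples of $x$, so $q \le_M x$ indeed forces $\lambda(q,x) \ge 1$, not the other way around. Once that is straight, the contrapositive is immediate and the inside factorial hypothesis is not actually used in this particular implication; it is only needed to ensure the framework (reduced, Cale basis) in which $\lambda$ is being considered.
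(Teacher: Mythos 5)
Your argument is correct and is essentially the paper's own proof, phrased as a contrapositive rather than a proof by contradiction: both unpack $x \in Q+M$ into $q \le_M x$ and read off $\lambda(q,x) \ge 1$ from the $m=n=1$ term of the supremum. Your closing observation that the inside factorial hypothesis is not actually used here is also accurate.
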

\begin{proof}
    Assume that $\lambda(q,x)<1$ for all $q\in Q$ and suppose that $x=q+y$ for some $y\in M$. Then $q\le_M x$, and consequently $\lambda(q,x)\ge 1$, a contradiction. Thus, $x\in \Ap(M,Q)$.
\end{proof}

Let $M$ be a cancellative monoid. Then $M$ can be embedded naturally in its quotient group $\mathrm G(M)$. We say that $M$ is \emph{root closed} if whenever $n z\in M$ for some positive integer $n$ and some $z\in \mathrm G(M)$, then $z\in M$.

We see next that if we add the root-closed condition to Lemma \ref{lem:lambda-lt-1}, then we get an equality. Clearly (as was already used in Lemma \ref{lem:lambda-lt-1}), if $q\le_M x$, then $\lambda(q,x)\ge 1$. The converse holds under the root-closed condition.

\begin{lemma}\label{lem:lambda-ge-1}
Let $M$ be a reduced root-closed inside factorial monoid with base $Q$. For $x\in M$ and $q\in Q$,  $\lambda(q,x)\ge 1$ if and only if $q\le_M x$. 
\end{lemma}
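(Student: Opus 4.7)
The forward direction is the trivial observation already noted before the lemma: if $q \le_M x$, then $1 \cdot q \le_M 1 \cdot x$, so $1$ lies in the set whose sup defines $\lambda(q,x)$. The substance of the lemma is the converse, and my plan is to first pin down $\lambda(q,x)$ exactly as a ratio of Cale coefficients, and then convert the inequality $\lambda(q,x)\ge 1$ into divisibility via root-closedness.

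Write the Cale expansion $n(x)x = \sum_{q'\in Q} x(q')q'$ and the analogous expansion for any other element we encounter. I claim $\lambda(q,x) = x(q)/n(x)$. The inequality $\lambda(q,x) \ge x(q)/n(x)$ is immediate, because $x(q)\, q \le_M n(x)\, x$ already shows $x(q)/n(x)$ lies in the set. For the reverse inequality, take any $m,n\in\mathbb{Z}^+$ with $mq \le_M nx$, say $mq + z = nx$ for some $z\in M$, and expand $n(z)z = \sum_{q'} z(q')q'$. Multiplying $mq + z = nx$ by $n(x)n(z)$ converts both sides into $\langle Q\rangle$:
\[
n(x)n(z)m\, q \;+\; n(x)\sum_{q'} z(q')q' \;=\; n(z)n \sum_{q'} x(q')q'.
\]
The uniqueness clause of inside factoriality, applied to the element $n(x)n(z)n\cdot x$, forces the coefficients of each $q'$ to agree; matching the coefficient of $q$ gives $n(x)n(z)m + n(x)z(q) = n(z)n\, x(q)$, whence $m/n \le x(q)/n(x)$. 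Taking the sup yields $\lambda(q,x) \le x(q)/n(x)$, and equality holds.

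Now assume $\lambda(q,x) \ge 1$. By the formula just established, $x(q) \ge n(x)$, so inside the quotient group $\mathrm G(M)$ we may rewrite
\[
n(x)(x-q) \;=\; (x(q)-n(x))q \;+\; \sum_{q'\ne q} x(q')q',
\]
and the right-hand side lies in $M$ since all coefficients are nonnegative integers. Root-closedness of $M$ then forces $x - q \in M$, i.e.\ $q \le_M x$, as required. The only nontrivial step is the identification $\lambda(q,x) = x(q)/n(x)$, and within it the upper bound obtained from an arbitrary witness $mq \le_M nx$; once that computation is carried out, the conclusion is a single application of root-closedness.
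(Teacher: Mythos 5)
Your proof is correct, and it ends exactly where the paper's does: produce a positive integer $n$ with $n(x-q)\in M$ and invoke root-closedness. The difference is in how that membership is obtained. The paper takes a single witness pair $m,n$ with $mq\le_M nx$ and $m\ge n$, observes $nq\le_M mq\le_M nx$, and subtracts; you instead first prove the identity $\lambda(q,x)=x(q)/n(x)$ from scratch (the paper imports this as Lemma~2(a) of the cited reference \emph{Inside factorial monoids and integral domains}) and then read off $n(x)(x-q)=(x(q)-n(x))q+\sum_{q'\ne q}x(q')q'\in M$ directly from the Cale expansion. Your route is longer but buys two things: it is self-contained, and it quietly repairs a small subtlety in the paper's phrasing --- the hypothesis $\lambda(q,x)\ge 1$ is a statement about a supremum, so the existence of an actual witness with $m\ge n$ is not automatic unless one knows the supremum is attained, which is precisely what your identification $\lambda(q,x)=x(q)/n(x)$ establishes. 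One minor point of care in your upper-bound computation: the uniqueness clause in the definition is stated for the representation of $n(x)x$, whereas you apply it to $n(x)n(z)n\,x$; this is legitimate because in the reduced setting the paper recasts the condition as $\mathbb{Q}$-linear independence of $Q$, which gives uniqueness of nonnegative integer representations of any element of $\langle Q\rangle$, but it is worth saying so explicitly.
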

\begin{proof}
By hypothesis, there exists two positive integers $m$ and $n$ such that $m q \le_M n x$, with $m\ge n$. Then $n q \le_M m q\le_M n x$, which yields $n x-n q\in M$. Hence $n(x-q)\in M$ and as $M$ is root closed, $x-q\in M$, equivalently, $q\le_M x$.

Now assume that $q\le_M x$. Then $\lambda(q,x)\ge \frac{1}1=1$.
\end{proof}

\begin{lemma}\label{lem:ap-lambda}
Let $M$ be a reduced and root-closed inside factorial monoid with basis $Q$. Then
\[
\Ap(M,Q)=\{x\in M \mid \lambda(q,x)<1 \hbox{ for all } q\in Q\}.
\]  
\end{lemma}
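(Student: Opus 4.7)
The plan is to observe that one inclusion is exactly Lemma \ref{lem:lambda-lt-1}, so the only work is the reverse containment $\Ap(M,Q)\subseteq \{x\in M\mid \lambda(q,x)<1\text{ for all }q\in Q\}$. For this, I would argue by contrapositive using Lemma \ref{lem:lambda-ge-1}, which is precisely the place where the root-closedness of $M$ is exploited.

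More concretely, I would suppose that $x\in M$ fails to satisfy $\lambda(q,x)<1$ for every $q\in Q$; that is, there is some $q\in Q$ with $\lambda(q,x)\ge 1$. Since $M$ is root closed and $Q$ is a Cale base, Lemma \ref{lem:lambda-ge-1} applies directly and gives $q\le_M x$. Writing $x=q+y$ for some $y\in M$, we conclude $x\in Q+M$, hence $x\notin \Ap(M,Q)$. This is the contrapositive of the inclusion we need.

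Combining this contrapositive with Lemma \ref{lem:lambda-lt-1} yields the claimed equality. There is no real obstacle here: the substantive content has already been packaged into the two preceding lemmas, so this statement is essentially a bookkeeping consequence of them. The only thing worth being careful about is that Lemma \ref{lem:lambda-ge-1} is formulated for $q\in Q$ rather than an arbitrary element of $M$, which is exactly the hypothesis available to us since we are testing $x$ against basis elements in the definition of $\Ap(M,Q)$.
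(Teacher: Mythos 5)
Your proof is correct and takes essentially the same route as the paper: the inclusion $\{\lambda(q,x)<1\}\subseteq\Ap(M,Q)$ is cited from Lemma \ref{lem:lambda-lt-1}, and the reverse inclusion follows from Lemma \ref{lem:lambda-ge-1} (the paper phrases it as a contradiction rather than a contrapositive, which is the same argument). Nothing is missing.
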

\begin{proof}
    Let $x \in \Ap(M,Q)$. Then $x\not\in q+M$ for all $q\in Q$. Assume that $\lambda(q,x)\ge 1$ for some $q\in Q$. By Lemma \ref{lem:lambda-ge-1}, $x-q\in M$. But this means that $x\in q+M$, contradicting that $x\in \Ap(M,Q)$.
    
    The other inclusion was shown in Lemma \ref{lem:lambda-lt-1}.
\end{proof}

Given $x\in M$, we can define 
\[
\nu(x)=\sup\{\lambda(q,x)\mid q\in Q\}. 
\]
Notice that the above supremum in our case is a maximum, since for every $x$ only for finitely many $q$, $\lambda(q,x)$ will be nonzero (by \cite[Lemma 2 (a)]{inside}, $\lambda(q,x)=n(x)/x(q)$).

Lemma \ref{lem:ap-lambda} can be restated as follows in terms of $\nu$.

\begin{corollary}
Let $M$ be a reduced and root-closed inside factorial monoid with basis $Q$. Then
\[
\Ap(M,Q)=\nu^{-1}([0,1)).
\]  
\end{corollary}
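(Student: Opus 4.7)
The plan is to reduce the corollary to Lemma~\ref{lem:ap-lambda} by simply translating the quantified condition ``$\lambda(q,x)<1$ for all $q\in Q$'' into a statement about $\nu(x)$.

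First, I would note that by its very definition $\nu(x)=\sup\{\lambda(q,x)\mid q\in Q\}$, and the paragraph preceding the corollary tells us this supremum is actually attained. In particular, $\nu(x)<1$ holds if and only if every $\lambda(q,x)$ is strictly less than $1$: the forward direction is immediate from $\lambda(q,x)\le \nu(x)$, while the backward direction uses that the sup is a max, so some $\lambda(q_0,x)=\nu(x)$, forcing $\nu(x)<1$. I would also check the lower bound: $\lambda(q,x)\ge 0$ always, since the pair $(m,n)=(0,1)$ trivially satisfies $mq\le_M nx$, giving $0$ as a member of the set whose supremum defines $\lambda(q,x)$. Hence $\nu(x)\in[0,\infty)$ for every $x\in M$, and the condition ``$\nu(x)<1$'' is the same as ``$\nu(x)\in[0,1)$''.

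Combining these two observations, the set $\{x\in M\mid \lambda(q,x)<1\text{ for all }q\in Q\}$ coincides with $\nu^{-1}([0,1))$. Applying Lemma~\ref{lem:ap-lambda}, which identifies the former set with $\Ap(M,Q)$, yields the desired equality $\Ap(M,Q)=\nu^{-1}([0,1))$.

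There is no real obstacle here; the only point worth being careful about is that the sup in the definition of $\nu$ is attained, so that the condition on $\nu$ faithfully encodes the conjunction over all $q\in Q$. This is precisely what the remark after the definition of $\nu$ guarantees, via the finiteness of the support of $x$ in its Cale decomposition.
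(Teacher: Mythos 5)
Your proof is correct and matches the paper's intent exactly: the corollary is presented there simply as a restatement of Lemma~\ref{lem:ap-lambda} in terms of $\nu$, and you supply precisely the routine translation (using that the supremum defining $\nu$ is attained and that $\lambda(q,x)\ge 0$). No issues.
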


With this machinery we can decompose $M$ in terms of $\Ap(M,Q)$ and $F=\langle Q\rangle$. With this decomposition, every element in $M$ can be expressed uniquely as $a+\sum_{q\in Q}\lambda_q q$, with $a\in\Ap(M,Q)$. Something similar occurs for simplicial Cohen-Macaulay affine semigroups, if we choose $Q$ to be the extremal rays of the semigroup, \cite[Theorem 1.5]{c-m}. In contrast, our monoids do not have to be finitely generated. In the finitely generated case, root-closed forces the Cohen-Macaulay property \cite{hoc}, so it is not surprising that both families of monoids share this decomposition.

\begin{theorem}\label{th:ap-decomp}
Let $M$ be a reduced root-closed inside factorial monoid with base $Q$. Then 
\[
M=\dot\bigcup_{a\in \Ap(M,Q)} a+\langle Q\rangle.
\]
\end{theorem}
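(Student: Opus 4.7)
The plan is to establish the decomposition in two steps—existence and uniqueness—both driven by the characterization $\Ap(M,Q)=\{x\in M : \lambda(q,x)<1\text{ for all }q\in Q\}$ from Lemma \ref{lem:ap-lambda}. Throughout I would work with the uniquely-determined rational ``coordinates'' $\lambda(q,x)=x(q)/n(x)$ coming from the Cale representation $n(x)x=\sum_{q\in Q} x(q)q$, and the additivity of $\lambda(q,\cdot)$ on $M$ (viewed inside $\mathrm{G}(M)\otimes \mathbb{Q}$), which follows at once from the uniqueness clause in the definition of Cale basis: if $n(x)x=\sum_q x(q)q$ and $n(y)y=\sum_q y(q)q$, then $n(x)n(y)(x+y)=\sum_q(n(y)x(q)+n(x)y(q))q$, so $\lambda(q,x+y)=\lambda(q,x)+\lambda(q,y)$.

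For existence, fix $x\in M$ and set $\lambda_q:=\lfloor \lambda(q,x)\rfloor\in \mathbb{N}$; only finitely many of these are nonzero since only finitely many $x(q)$ are. Define $a:=x-\sum_q\lambda_q q$ inside $\mathrm{G}(M)$. Clearing denominators gives $n(x)a=\sum_q(x(q)-n(x)\lambda_q)q\in\langle Q\rangle\subseteq M$, because $n(x)\lambda_q\le x(q)$ by the choice of $\lambda_q$. The root-closed hypothesis then pulls $a$ back into $M$. Finally, additivity of $\lambda$ yields $\lambda(q,a)=\lambda(q,x)-\lambda_q\in[0,1)$ for every $q\in Q$, and Lemma \ref{lem:ap-lambda} concludes $a\in \Ap(M,Q)$; hence $x=a+\sum_q\lambda_q q$ lies in $a+\langle Q\rangle$.

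For uniqueness, suppose $a+\sum_q\mu_q q=b+\sum_q\nu_q q$ with $a,b\in \Ap(M,Q)$ and $(\mu_q),(\nu_q)$ finitely supported sequences in $\mathbb{N}$. Applying $\lambda(q,\cdot)$ gives $\lambda(q,a)+\mu_q=\lambda(q,b)+\nu_q$, so $\nu_q-\mu_q=\lambda(q,a)-\lambda(q,b)$. The left side is an integer, while by Lemma \ref{lem:ap-lambda} both $\lambda(q,a),\lambda(q,b)\in[0,1)$, so the right side lies in $(-1,1)$; hence $\mu_q=\nu_q$ for every $q$, and cancellativity in $M$ forces $a=b$.

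The main obstacle I expect is the clean bookkeeping around $\lambda(q,\cdot)$: one must verify that the ``natural'' value of $\lambda(q,a)$ read off from $n(x)a=\sum_q(x(q)-n(x)\lambda_q)q$ agrees with $\sup\{m/n:mq\le_M na\}$, i.e., that passing to a possibly smaller denominator $n(a)\mid n(x)$ does not alter the rational value. Once this is noted—together with the additivity of $\lambda$ as a formal consequence of the uniqueness of the Cale representation—the root-closed hypothesis does the remaining work, guaranteeing that every element of $\mathrm{G}(M)$ whose multiple lies in $\langle Q\rangle\subseteq M$ is itself in $M$, which is precisely what lets us define the Apéry-part $a$ inside $M$.
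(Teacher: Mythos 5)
Your proof is correct, and it reaches the same destination as the paper's --- the coefficients are $\lfloor\lambda(q,x)\rfloor$ and the remainder lands in $\Ap(M,Q)$ via Lemma~\ref{lem:ap-lambda} --- but by a somewhat different mechanism. The paper peels off one copy of $q$ at a time: whenever $\lambda(q,x)\ge 1$, Lemma~\ref{lem:lambda-ge-1} (which is where root-closedness enters, once per subtraction) gives $q\le_M x$, and the identity $\lambda(q,x-q)=\lambda(q,x)-1$ drives the iteration until the value drops below $1$. You instead clear denominators globally: $n(x)\bigl(x-\sum_q\lfloor\lambda(q,x)\rfloor q\bigr)=\sum_q\bigl(x(q)-n(x)\lfloor\lambda(q,x)\rfloor\bigr)q\in\langle Q\rangle$, and a single application of root-closedness puts the Ap\'ery part in $M$. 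This is cleaner and avoids the (mild) bookkeeping about termination of the iteration over the finitely many relevant $q$. The one point you rightly flag --- that $\lambda(q,z)=z(q)/n(z)$ is independent of the chosen denominator and is additive --- is exactly what the paper imports from \cite[Lemma 2]{inside}, so your derivation of it from uniqueness of Cale representations is legitimate. A further difference in your favour: you prove disjointness explicitly (integer difference of coefficients trapped in $(-1,1)$, then cancellativity), whereas the paper's proof establishes only the existence of the decomposition and leaves the disjointness implicit in the fact that the coefficients are forced to equal $\lfloor\lambda(q,x)\rfloor$.
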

\begin{proof}
Let $x\in M$. Then $n(x)x= \sum_{q\in Q} x(q) q$. If $\lambda(q,x)<1$ for all $q\in Q$, using Lemma \ref{lem:lambda-lt-1} we have $x\in\Ap(M,Q)$, and we are done.

If $\lambda(q,x)\ge 1$ for some $q$, then for all $(m,n)$ such that $m q\le_M n x$, we have that $m\ge n$. Also $q\le_M x$ by Lemma \ref{lem:lambda-ge-1}. Thus, the map $(m,n)\mapsto (m-n,n)$ is a bijection between the sets $\{(m,n)\mid m q\le_M n x\}$ and $\{(m,n)\mid m q\le_M n(x-q)\}$. This implies that $\lambda(q,x-q)=\lambda(q,x)-1$. If $\lambda(q,x-q)\ge 1$, then we repeat the argument and subtract $q$ to $x-q$. In this way we obtain a chain of strictly decreasing nonnegative real numbers, which at some point will reach an element in the interval $[0,1)$. Thus, there exists $k\in \mathbb N$ such that $x-kq\in M$ and $x-(k+1)q\not\in M$. Set $y=x-k q$, then $x=k q+y$, with $y-q\not\in M$, or equivalently $\lambda(q,y)<1$. We easily deduce that $k=\peb[\lambda(q,x)]$, as $1>\lambda(q,y)=\lambda(q,x-kq)=\lambda(x,q)-k>0$.

By \cite[Lemma 2 (a)]{inside}, $\lambda(q,x)=x(q)/n(x)$. Hence there are only finitely many $q\in Q$ such that $\lambda(q,x)\neq 0$. This lemma also states that for every $q'\in Q$, $\lambda(q',x)=\lambda(q',kq+y)=k \lambda(q',q)+\lambda(q',y)$. As a consequence of the definition of inside factorial monoid and \cite[Lemma 2]{inside}, one gets that for $q\neq q'$, $q(q')=0=q'(q)$ and thus $\lambda(q,q')=0$. It follows that $\lambda(q',y)\neq 0$ only for finitely many $q'\in Q$. By repeating the argument with $y$, we end up writing $x$ in the form $x=\sum_{q\in Q} \peb[\lambda(x,q)] q+ w$, with  $w\in \Ap(M,Q)$ (notice that $ \peb[\lambda(x,q)]$ are all zero except a finite number of them).
\end{proof}

Compare also the decomposition in Theorem \ref{th:ap-decomp} with the Stanley decomposition described in \cite[Section 4.6]{stanley}. This decomposition can also be stated in terms of the group spanned by the monoid, as the following corollary shows.

\begin{corollary}\label{cor:ap-decomp-GM}
Let $M$ be a reduced root-closed inside factorial monoid with base $Q$. Then 
\[
\mathrm{G}(M)=\dot\bigcup_{a\in \Ap(M,Q)} a+\mathrm{G}(Q).
\]
Moreover, $a+f\in M$ with $a\in \Ap(M,Q)$ and $f\in \mathrm{G}(Q)$ if and only if $f\in \langle Q\rangle$.
\end{corollary}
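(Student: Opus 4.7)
My plan is to lift Theorem~\ref{th:ap-decomp} from $M$ to $\mathrm{G}(M)$ by first extending the coefficient function $\lambda(q,\cdot)$ to all of $\mathrm{G}(M)$. The starting observation is that $\mathrm{G}(M)/\mathrm{G}(Q)$ is a torsion group: for each $x\in M$, the Cale basis identity $n(x)x\in\langle Q\rangle\subseteq\mathrm{G}(Q)$ shows that $x$ is torsion modulo $\mathrm{G}(Q)$, and since $\mathrm{G}(M)=M-M$, the same holds for every $z\in\mathrm{G}(M)$. Writing $Nz=\sum_q \beta_q q$ for a suitable $N\in\mathbb Z^+$, the $\mathbb{Q}$-linear independence of $Q$ lets me set $\lambda(q,z):=\beta_q/N\in\mathbb{Q}$ unambiguously, yielding an additive extension of $\lambda(q,\cdot)$ to $\mathrm{G}(M)$ in which only finitely many $\lambda(q,z)$ are nonzero (since both $x$ and $y$ in any writing $z=x-y$ have finite support in $Q$).

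For existence of the decomposition, given $z\in\mathrm{G}(M)$ I would set
\[ f=\sum_{q\in Q}\peb[\lambda(q,z)]\,q\in\mathrm{G}(Q),\qquad a=z-f, \]
so that $\lambda(q,a)=\lambda(q,z)-\peb[\lambda(q,z)]\in[0,1)$ for every $q\in Q$. The main obstacle is showing $a\in M$: choosing $N\in\mathbb{Z}^+$ clearing the denominators of the finitely many nonzero $\lambda(q,a)$ writes $Na=\sum_q (N\lambda(q,a))\,q$ as a sum with \emph{nonnegative} integer coefficients, hence $Na\in\langle Q\rangle\subseteq M$; root-closedness of $M$ then yields $a\in M$. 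Lemma~\ref{lem:ap-lambda} promotes this to $a\in\Ap(M,Q)$.

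Uniqueness again uses $\mathbb{Q}$-linear independence: if $a+f=a'+f'$ with $a,a'\in\Ap(M,Q)$ and $f,f'\in\mathrm{G}(Q)$, then $f'-f=a-a'=\sum_q \gamma_q q$ for integers $\gamma_q=\lambda(q,a)-\lambda(q,a')\in(-1,1)$, so each $\gamma_q$ vanishes, forcing $f=f'$ and $a=a'$.

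For the ``moreover'' clause, the ``if'' direction is immediate because $\langle Q\rangle\subseteq M$. Conversely, suppose $a+f\in M$ with $a\in\Ap(M,Q)$ and $f\in\mathrm{G}(Q)$; applying Theorem~\ref{th:ap-decomp} to $a+f\in M$ gives $a+f=a''+g$ with $a''\in\Ap(M,Q)$ and $g\in\langle Q\rangle$, and the uniqueness just established inside $\mathrm{G}(M)$ forces $a=a''$ and $f=g\in\langle Q\rangle$.
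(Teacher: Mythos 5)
Your argument is correct, and for the existence half of the decomposition it takes a genuinely different route from the paper. The paper stays inside $M$: it writes $z=u-v$ with $u,v\in M$, applies Theorem~\ref{th:ap-decomp} to each, and then eliminates the negative Ap\'ery component $-b$ via the Cale identity $-b=(n(b)-1)b-\sum_q b(q)q$, so that a second application of Theorem~\ref{th:ap-decomp} to $a+(n(b)-1)b\in M$ finishes the job; root-closedness is used only through that theorem. You instead extend $\lambda(q,\cdot)$ to all of $\mathrm{G}(M)$ (legitimate: $\mathrm{G}(M)/\mathrm{G}(Q)$ is torsion and the $\mathbb{Q}$-linear independence of $Q$ makes $\beta_q/N$ well defined, agreeing with the original $\lambda$ on $M$ by the cited $\lambda(q,x)=x(q)/n(x)$), peel off the integer parts to define $f=\sum_q\lfloor\lambda(q,z)\rfloor q$, and use root-closedness directly to show the remainder lies in $M$. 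Your version buys an explicit formula for the two components and makes the disjointness of the union transparent, whereas the paper's proof leaves disjointness implicit (it follows from the ``moreover'' clause combined with the disjointness in Theorem~\ref{th:ap-decomp}); the cost is the extra bookkeeping of checking the extension of $\lambda$ is well defined and additive. One small point worth tightening: rather than saying ``choose $N$ clearing the denominators of $\lambda(q,a)$,'' choose $N$ with $Na\in\mathrm{G}(Q)$ (such $N$ exists by your torsion observation); for that $N$ the identity $Na=\sum_q(N\lambda(q,a))q$ holds by definition of the extension, and the coefficients are nonnegative integers since $\lambda(q,a)\in[0,1)$. For an arbitrary denominator-clearing $N$ the displayed identity requires $\mathrm{G}(M)$ to be torsion-free — which does hold here because $M$ is reduced and root-closed, but you did not record that. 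The ``moreover'' clause and the uniqueness argument are essentially the paper's.
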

\begin{proof}
Let $z\in \mathrm{G}(M)$. Then $z=u-v$ for some $u,v\in M$. By Theorem \ref{th:ap-decomp}, there exist $a,b\in \Ap(M,Q)$ and $f,g\in \langle Q\rangle$ such that $u=a+f$ and $v=b+g$. Thus $z=(a-b)+(f-g)$. But $-b=(n(b)-1)b-\sum b(q)q$. Hence $z=(a+(n(b)-1)b)+(f-g-\sum b(q)q)$. Observe that $a+(n(b)-1)b\in M$, and consequently there exists $c\in\Ap(M,Q)$ and $h\in \langle Q\rangle$ such that $a+(n(b)-1)b=c+h$ (Theorem \ref{th:ap-decomp}). It follows that $z=c+(f-g+h-\sum b(q)q)\in c+\mathrm{G}(Q)$. This proves one inclusion, and the other is trivial. 

Now assume that $a\in\Ap(M,Q)$ and $f\in\mathrm{G}(Q)$ are such that $a+f\in M$. Then by Theorem \ref{th:ap-decomp}, there exists $b\in \Ap(M,Q)$ and $g\in \langle Q\rangle$ such that $a+f=b+g$. Write $f=f'-f''$ with $f',f''\in \langle Q\rangle$. Then $a+f'=b+(g+f'')$, and by Theorem \ref{th:ap-decomp} this implies that $a=b$ and $f'=g+f''$. In particular, $f=f'-f''=g\in \langle Q\rangle$.
\end{proof}


\begin{corollary}\label{cor:two-non-extreme}
Let $M$ be a reduced root-closed inside factorial monoid with base $Q$. Assume that $M$ has beside the strong atoms in $Q$ two more atoms, say $u$ and $v$. Then each $x\in M$ has a unique parametrized representation 
\[
x=\sum_{q\in Q}\lambda_q q+ m u+ n v,
\]
with $\lambda_q\in \mathbb{N}$ and parameters $m,n\in\mathbb{N}$ restricted by 
\[
m\lambda(q,u)+n\lambda(q,v)<1,
\]
for all $q\in Q$.
\end{corollary}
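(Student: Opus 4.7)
The plan is to reduce the statement to Theorem \ref{th:ap-decomp} and then to handle uniqueness of the $(m,n)$ part as a separate arithmetic issue. For existence I would take $x\in M$ and apply Theorem \ref{th:ap-decomp} to obtain the unique decomposition $x=a+\sum_{q\in Q}\lambda_q q$ with $a\in \Ap(M,Q)$ and $\lambda_q\in\mathbb N$. Since the atoms of $M$ are $Q\cup\{u,v\}$ and $a\notin q+M$ for every $q\in Q$, any atom factorisation of $a$ involves only $u,v$, so $a=mu+nv$ for some $m,n\in\mathbb N$. The linearity $\lambda(q,mu+nv)=m\lambda(q,u)+n\lambda(q,v)$ noted in the proof of Theorem \ref{th:ap-decomp}, together with the equality $\Ap(M,Q)=\nu^{-1}([0,1))$ from the corollary to Lemma \ref{lem:ap-lambda}, then delivers the bound $m\lambda(q,u)+n\lambda(q,v)<1$.

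For uniqueness, suppose $x=\sum\lambda_q q+mu+nv=\sum\lambda'_q q+m'u+n'v$ with both sides obeying the bound. Lemma \ref{lem:lambda-lt-1} applied to $mu+nv$ and to $m'u+n'v$ places both of them in $\Ap(M,Q)$, and the uniqueness in Theorem \ref{th:ap-decomp} then gives $\lambda_q=\lambda'_q$ for every $q$ together with $mu+nv=m'u+n'v$ in $M$. It remains to deduce $m=m'$ and $n=n'$ from this last identity.

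The main obstacle is showing that the distinct atoms $u$ and $v$ cannot satisfy any nontrivial relation $ku=lv$ in $M$ with $k,l>0$. Rewriting $mu+nv=m'u+n'v$ in $\mathrm G(M)$ as $(m-m')u=(n'-n)v$, observe first that a nonzero element of a reduced cancellative monoid has infinite order in $\mathrm G(M)$, so if either side vanishes then so does the other. In the case $m>m'$ and $n>n'$ (or its mirror) the rearrangement $(m-m')u+(n-n')v=0$ would live in $M$ with two nonzero $M$-summands, impossible because $M$ is reduced. One is thus reduced to a genuine relation $ku=lv$ in $M$ with $k,l>0$. After dividing by $\gcd(k,l)$ I may assume $\gcd(k,l)=1$ and choose $\alpha,\beta\in\mathbb Z$ with $\alpha l+\beta k=1$; setting $\xi=\alpha u+\beta v\in\mathrm G(M)$, a direct substitution using $ku=lv$ yields $l\xi=u$ and $k\xi=v$. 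Root-closedness of $M$ then forces $\xi\in M$, and since $u$ is an atom in the reduced monoid $M$ the equality $u=l\xi$ forces $l=1$; symmetrically $k=1$, and therefore $u=\xi=v$, contradicting that $u$ and $v$ are distinct atoms. This is where both root-closedness and the atom hypothesis on $u,v$ are indispensable; existence and the bound are essentially formal consequences of Theorem \ref{th:ap-decomp}.
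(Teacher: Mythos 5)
Your existence argument and the derivation of the bound $m\lambda(q,u)+n\lambda(q,v)<1$ coincide with the paper's: both reduce to Theorem \ref{th:ap-decomp}, identify the Ap\'ery component with a sum of copies of $u$ and $v$ (using that these are the only atoms outside $Q$), and invoke additivity of $\lambda$ in the second argument together with Lemma \ref{lem:ap-lambda}. Your handling of uniqueness is also organised slightly more carefully than the paper's, in that you use the stated bound plus Lemma \ref{lem:lambda-lt-1} to place both candidates $mu+nv$ and $m'u+n'v$ back in $\Ap(M,Q)$ before appealing to the disjointness in Theorem \ref{th:ap-decomp}. Where you genuinely diverge is the final step, ruling out a relation $ku=lv$ with $k,l>0$. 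The paper subtracts the smaller coefficient: assuming $k\ge l$ it writes $(k-l)u=l(v-u)$, applies root-closedness once to get $v-u\in M$, and contradicts atomicity of $v$. Your B\'ezout argument ($\xi=\alpha u+\beta v$ with $l\xi=u$, $k\xi=v$) reaches the same contradiction but is longer and, more importantly, depends on first reducing to the case $\gcd(k,l)=1$.

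That reduction is the one step you cannot take for free. From $dk_1u=dl_1v$ you want $k_1u=l_1v$, i.e.\ you want to cancel $d$ in $\mathrm{G}(M)$; but $\mathrm{G}(M)$ need not be torsion-free --- indeed the structure theorem in this very paper exhibits root-closed inside factorial monoids whose quotient group contains an arbitrary torsion group $G$ as a direct factor. The step is repairable with the hypotheses at hand: setting $z=k_1u-l_1v\in\mathrm{G}(M)$ one has $dz=0\in M$ and $d(-z)=0\in M$, so root-closedness puts both $z$ and $-z$ in $M$, making $z$ a unit, and reducedness forces $z=0$. With that sentence inserted your proof is complete; without it there is a genuine (if small) gap, and it is worth noting that the paper's subtraction argument avoids the issue entirely because it never leaves the additive span of $u$ and $v$ inside $M$ except through a single, explicitly justified application of root-closedness.
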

\begin{proof}
From Theorem \ref{th:ap-decomp}, we have that $x$ has a unique representation $x=\sum_{q\in Q}\lambda_q q+ y$, with $y\in\Ap(M,Q)$. 

First, we show that $y=m u+n v$, where $m,n\in \mathbb{N}$ are uniquely determined. That $y$ is written in this way follows from the fact that the only atoms that are not in $Q$ are $u$ and $v$. For uniqueness, suppose that $m u+n v=m' u + n' v$ for some $m',n'\in \mathbb{N}$. Assume without loss of generality that $m\ge m'$. Therefore $(m-m')u=(n'-n)v$. If $m-m'\ge n'-n$, then $((m-m')-(n'-n))u=(n'-n)(v-u)\in M$. But $M$ is root-closed, and consequently $v-u\in M$, contradicting that $u$ and $v$ are atoms. If $n'-n\ge m-m'$, we argue in the same way. 

By Lemma \ref{lem:ap-lambda}, $\lambda(q,y)<1$, and we already know that $\lambda$ is additive on the second argument. Thus $m \lambda(q,u)+ n\lambda(q,v)<1$.
\end{proof}

On $\Ap(M,Q)$ we can define the following binary operation. For $a,b\in \Ap(M,Q)$, $a+b=c+f$, with $c\in \Ap(M,Q)$ and $f\in \langle Q\rangle$ ($c$ and $f$ are unique; Theorem \ref{th:ap-decomp}). Since $f$ is unique we will denote it by $\mathrm{I}(a,b)$ (following \cite{s-cm-affine}). We set $a\oplus b=c$. 
The same construction can be performed when $M$ is a Cohen-Macaulay simplicial affine semigroup, \cite[Proposition 4]{s-cm-affine}. 

Thus, with the above notation
\[
a+b=a\oplus b + \mathrm{I}(a,b).
\]

Let $a\in \Ap(M,Q)$ and let $n$ be a nonnegative integer. We denote $\overline{na}$ as the unique element in $\Ap(M,Q)$ such that $na\in \overline{na} + \langle Q\rangle$. The existence of such element is once more guaranteed by Theorem \ref{th:ap-decomp}. Observe that
\[
na=\overline{na}+\sum_{i=1}^{n-1}\mathrm{I}(a,\overline{ia}),
\]
and that $\overline{na}=a\oplus \stackrel{n}{\cdots} \oplus a$. This notation is motivated by the fact that $\oplus$ is just $+$ modulo $\langle Q\rangle$. 

Also notice that the inverse of $a$ with respect to $\oplus$ is $\overline{(n(a)-1)a}$, since from the definition of Cale monoid $n(a)a\in \langle Q\rangle$.


Let us see some of the basic properties $\mathrm{I}$ has. They are very similar to those for the case of Cohen-Macaulay simplicial affine semigroups \cite{s-cm-affine}; and those were abstracted from the concept of Tamura's $\mathcal{N}$-semigroups \cite{tamura}.

\begin{proposition}
    Let $M$ be a reduced root-closed with Cale basis $Q$, and let $\mathrm{I}$ be defined as above.
    \begin{enumerate}
        \item For all $a,b\in \Ap(M,Q)$, $\mathrm{I}(a,b)=\mathrm{I}(b,a)$.
        \item For all $a\in \Ap(M,Q)$, $\mathrm{I}(a,0)=0$.
        \item For every $a,b,c\in \Ap(M,Q)$, $\mathrm{I}(a,b)+\mathrm{I}(a\oplus b,c)=\mathrm{I}(b,c)+\mathrm{I}(a,b\oplus c)$.
        \item If $a, b\in \Ap(M,Q)$, $a\neq 0$ and $a\oplus b=0$, then $\mathrm{I}(a,b)\not\in Q\cup\{0\}$.
        \item For all $a\in \Ap(M,Q)\setminus\{0\}$, $\sum_{i=1}^{n(a)-1}\mathrm{I}(a,\overline{i a})\neq n(a)f$ for any $f\in\langle Q\rangle$.
        \item For every positive integer $n$, every $a\in \Ap(M,Q)$ and every $f\in \mathrm{G}(Q)$, $\sum_{i=1}^{n-1}\mathrm{I}(a,\overline{i a})+n f\in \langle Q\rangle$ implies $f\in \langle Q\rangle$.
    \end{enumerate}
\end{proposition}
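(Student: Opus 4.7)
The first three items reduce directly to the uniqueness of the $\Ap(M,Q)+\langle Q\rangle$-decomposition given by Theorem \ref{th:ap-decomp}. For (1), commutativity of $+$ in $M$ gives $a+b=b+a$, and uniqueness of the decomposition forces $\mathrm{I}(a,b)=\mathrm{I}(b,a)$. For (2), $a+0=a\in\Ap(M,Q)$ is already presented in decomposed form with trivial $\langle Q\rangle$-part, so $\mathrm{I}(a,0)=0$. For (3), I would expand $(a+b)+c$ and $a+(b+c)$ in two successive applications of the definitions, obtaining $(a\oplus b)\oplus c + \mathrm{I}(a\oplus b, c) + \mathrm{I}(a,b)$ and $a\oplus(b\oplus c) + \mathrm{I}(a, b\oplus c) + \mathrm{I}(b,c)$ respectively. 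Associativity of $+$ together with uniqueness then forces the Ap\'ery parts to coincide (so $\oplus$ is associative) and the $\langle Q\rangle$-parts to coincide, giving exactly the identity in (3).

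For (4), the most delicate part, the hypothesis $a\oplus b=0$ gives $a+b=\mathrm{I}(a,b)$. The case $\mathrm{I}(a,b)=0$ is ruled out because $a+b=0$ combined with reducedness yields $a=0$, contradicting $a\neq 0$. In the case $\mathrm{I}(a,b)=q\in Q$ one has $a\le_M q$; relying on the elements of the Cale basis $Q$ being strong atoms, this forces $a=mq$ for some $m\in\mathbb{N}$. But $m\ge 1$ yields $a\in q+M\subseteq Q+M$, contradicting $a\in\Ap(M,Q)$, and $m=0$ contradicts $a\neq 0$. The main obstacle is precisely the case $\mathrm{I}(a,b)\in Q$: without invoking the strong-atom property of the basis elements (which is the canonical Cale basis in this setting) there is no evident way to rule it out.

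For (5) and (6), both depend on the identity $\sum_{i=1}^{n-1}\mathrm{I}(a,\overline{ia})=na-\overline{na}$ from the paragraph preceding the proposition. For (5), take $n=n(a)$; since $n(a)a\in\langle Q\rangle$ one has $\overline{n(a)a}=0$, so the sum equals $n(a)a$. If this equaled $n(a)f$ for some $f\in\langle Q\rangle$, then $n(a)(a-f)=0$ in $\mathrm G(M)$, so root-closedness places $a-f$ in $M$ and cancellativity together with reducedness then force $a=f$. Since $a\ne 0$, $f$ has a positive coefficient on some $q\in Q$, so $a\in q+M$, contradicting $a\in\Ap(M,Q)$. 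For (6), the hypothesis $\sum_{i=1}^{n-1}\mathrm{I}(a,\overline{ia})+nf\in\langle Q\rangle$ combined with the identity yields $n(a+f)=\overline{na}+g$ for some $g\in\langle Q\rangle$; the right-hand side lies in $M$, so root-closedness gives $a+f\in M$, and Corollary \ref{cor:ap-decomp-GM} applied to $a\in\Ap(M,Q)$ and $f\in\mathrm G(Q)$ then yields $f\in\langle Q\rangle$.
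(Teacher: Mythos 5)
Items (1)--(3), (5) and (6) of your proof are correct and follow essentially the same route as the paper: uniqueness of the decomposition in Theorem \ref{th:ap-decomp} for the first three, and the identity $na=\overline{na}+\sum_{i=1}^{n-1}\mathrm{I}(a,\overline{ia})$ combined with root-closedness (plus Corollary \ref{cor:ap-decomp-GM} for item (6)) for the last two.

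The only real divergence is item (4), and your hesitation there is well founded. The paper argues numerically: from $q'=a+b$ it deduces $n(a)n(b)=n(a)b(q')+n(b)a(q')$ and claims a contradiction from $\lambda(q',a)=n(a)/a(q')<1$ and $\lambda(q',b)=n(b)/b(q')<1$. But the formula $\lambda(q,x)=n(x)/x(q)$ is the reciprocal of the one that follows from the definition of $\lambda$ (namely $\lambda(q,x)=x(q)/n(x)$, the version actually used in the proof of Theorem \ref{th:ap-decomp}); with the correct formula the identity only yields $\lambda(q',a)+\lambda(q',b)=1$ with both summands in $(0,1)$, which is no contradiction. In fact, for an arbitrary Cale basis the statement fails: take $M=\mathbb{N}$ with $Q=\{2\}$, which is a Cale basis of this reduced root-closed monoid; then $\Ap(M,Q)=\{0,1\}$, $1\oplus 1=0$ and $\mathrm{I}(1,1)=2\in Q$. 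So an extra hypothesis of the kind you invoke --- that the elements of $Q$ are strong atoms, which does hold for the bases of extremal rays appearing in the paper's applications --- is genuinely needed, and under that hypothesis your argument ($a\le_M q$ forces $a$ to be a multiple of $q$, which is incompatible with $a\in\Ap(M,Q)\setminus\{0\}$) is a correct proof of (4).
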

\newcommand{\rmI}{\mathrm{I}}
\begin{proof}
The first and second assertions follow easily from the definition. The third reflects the fact that $\oplus$ is associative. \[a+(b+c)=a+(b\oplus c+\mathrm{I}(b,c))= a\oplus(b\oplus c)+\mathrm{I}(a,b\oplus c)+\mathrm{I}(b,c)\]
and
\[(a+b)+c=(a\oplus b+\rmI(a,b)+c=(a\oplus b)\oplus c + \rmI(a\oplus b,c)+\rmI(a,b).\]
By Theorem \ref{th:ap-decomp}, $a\oplus(b\oplus c)=(a\oplus b)\oplus c$ and $\mathrm{I}(a,b\oplus c)+\mathrm{I}(b,c)=  \rmI(a\oplus b,c)+\rmI(a,b)$.

Now assume that $a\oplus b=0$ for some $a,b\in\Ap(M,Q)$. This implies that $a+b=\rmI(a,b)$. If $\rmI(a,b)=0$, then $a+b=0$, and as $M$ is reduced, we obtain $a=b=0$, a contradiction. If $\rmI(a,b)=q'\in Q$, then $q'=a+b$.  We have
$n(a)n(b)q' =n(a)n(b)(a+b) = \sum_{q\in Q} (n(a)b(q)+n(b)a(q))q$ thus $a(q)=0=b(q)$ for all $q\in Q\setminus\{ q'\}$, and $n(a)n(b)=n(a)b(q')+n(b)a(q')$.

Recall that $\lambda(q',a)=n(a)/a(q')<1$ and $\lambda(q',b)=n(b)/b(q')<1$ since $a,b\in \Ap(M,Q)$ (Lemma \ref{lem:ap-lambda}). Hence $n(a)n(b)>2n(a)n(b)$, a contradiction. This proves the fourth assertion.

Let $a\in\Ap(M,Q)$. Assume that there exists $f\in \langle Q\rangle$ such that $n(a)f=\sum_{i=1}^{n(a)-1} \rmI(a,\overline{ia})$. As $\overline{n(a)a}=0$, $n(a)a=\sum_{i=1}^{n(a)-1} \rmI(a,\overline{ia})=n(a)f$. Hence $a=f$, as $M$ is root closed. This can only be the case if $a=0=f$. The fifth assertion now follows easily.

Finally, suppose that we have $n,a,f$ under the hypothesis of the last statement. Then $a+f\in \mathrm G(M)$, and $n(a+f)=na+nf= \overline{na}+\sum_{i=1}^{n-1}\rmI(a,\overline{ia})+nf\in M$. As $M$ is root closed, $a+f\in M$, which means that $f\in \langle Q\rangle$ (Corollary \ref{cor:ap-decomp-GM}).
\end{proof}

This inspires the following structure theorem for reduced inside factorial monoids.

\begin{theorem}
    Let $G$ be a torsion Abelian group and let $F$ be the free monoid on a set $Q$. Let $I$ be a map $I:G\times G\to F$ fulfilling the following conditions:
    \begin{enumerate}
        \item for all $a,b\in G$, $I(a,b)=I(b,a)$,
        \item for all $a\in G$, $I(a,0)=0$,
        \item for every $a,b,c\in G$, $I(a,b)+I(a+ b,c)=I(b,c)+I(a,b+ c)$,
        \item if $a\in G\setminus\{0\}$, then $I(a,-a)\not\in Q\cup\{0\}$.
    \end{enumerate}    
    On $G\times F$ define the operation $(a,f)+_I(b,g)=(a+b,f+g+I(a,b))$. Then $G\times F$ with this binary operation is a reduced inside factorial monoid with basis $\{0\}\times Q$. Moreover, all elements in $\{0\}\times Q$ are irreducible, and $\Ap(G\times F,\{0\}\times Q)=G\times \{0\}$.
    
    If in addition, 
    \begin{itemize}
        \item[\textrm{(5)}] for every positive integer $n$, every $a\in G$ and every $f\in \mathrm{G}(F)$, $\sum_{i=1}^{n-1}I(a,i a)+n f\in F$ implies $f\in F$,
    \end{itemize}
    then $G\times F$ is root-closed.
\end{theorem}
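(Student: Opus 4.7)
The plan is to verify the monoid axioms first, then the Cale basis and Apéry description, and finally the root-closedness. For the monoid structure on $(G\times F,+_I)$: commutativity is immediate from (1); the identity is $(0,0)$ by (2) combined with (1) to get $I(0,a)=0$; associativity reduces to the cocycle identity (3) after expanding both sides of $((a,f)+_I(b,g))+_I(c,h)=(a,f)+_I((b,g)+_I(c,h))$. Cancellativity is free: the first coordinates cancel in the group $G$, forcing the $I$-terms to agree, after which the second coordinates cancel in the free monoid $F$.

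The next step is a straightforward induction, using (1) in the form $I(ia,a)=I(a,ia)$, to establish
\[
n\cdot(a,f)=\bigl(na,\,nf+\sum_{i=1}^{n-1}I(a,ia)\bigr).
\]
Because $G$ is torsion, one picks $n$ with $na=0$ and obtains $n(a,f)\in\{0\}\times F=\langle\{0\}\times Q\rangle$; uniqueness of the $Q$-coefficients is automatic because $F$ is free on $Q$, so $\{0\}\times Q$ is a Cale basis. Since $(0,q)+_I(a,f)=(a,q+f)$, we have $(\{0\}\times Q)+(G\times F)=G\times(F\setminus\{0\})$, hence $\Ap(G\times F,\{0\}\times Q)=G\times\{0\}$. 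For reducedness, the equation $(a,f)+_I(b,g)=(0,0)$ forces $f+g+I(a,b)=0$ in the free monoid $F$, so in particular $I(a,b)=0$; applying (4) with $b=-a$ then forces $a=0$, and then $f=g=0$. Irreducibility of each $(0,q)$ follows by the same total-degree argument: if $(0,q)=(a,f)+_I(b,g)$ with $a\neq 0$, then $b=-a$ and $I(a,-a)$ has total degree at least two by (4), which cannot fit inside a degree-one element $q$ of $F$.

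For the root-closedness statement I would describe the quotient group explicitly. The formula $+_I$ extends verbatim to $G\times\mathrm{G}(F)$, which becomes a group with inverse $-(a,f)=(-a,\,-f-I(a,-a))$; since $G\times F$ sits inside as a generating submonoid, $\mathrm{G}(G\times F)=G\times\mathrm{G}(F)$. Given $z=(a,f)$ in this ambient group with $nz\in G\times F$, the iteration formula yields $nf+\sum_{i=1}^{n-1}I(a,ia)\in F$, and hypothesis (5) then gives $f\in F$, i.e.\ $z\in G\times F$.

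The main obstacle I anticipate is not conceptual but notational: one must be careful that equalities in $F$ are read as equalities of words in a free commutative monoid, so that matching supports and total degrees yields the conclusions above, and that associativity is checked in the correct order so that the cocycle identity (3) is applied to the right triple. Once that viewpoint is in place, the only places where a listed hypothesis carries real weight are (4), which drives both reducedness and the irreducibility of each basis element, and (5), which drives root-closedness.
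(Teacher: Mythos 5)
Your proposal is correct and follows essentially the same route as the paper's proof: verify the monoid axioms via the cocycle identity, use torsion of $G$ and freeness of $F$ for the Cale basis, degree arguments in $F$ together with hypothesis (4) for reducedness, irreducibility and the Apéry set, and the identification $\mathrm{G}(G\times F)=G\times\mathrm{G}(F)$ plus hypothesis (5) for root-closedness. The only spots worth writing out fully are the $a=0$ case in the irreducibility of $(0,q)$ (where one factor must be the unit $(0,0)$) and the verification that $G\times\mathrm{G}(F)$ is indeed the quotient group, both of which are routine.
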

\begin{proof}
The first three assertions are telling us that $G\times F$ with this binary operation is a monoid with identity element $(0,0)$. 

Assume that $(a,f)+_I(c,h)=(b,g)+_I(c,h)$. Then $a+c=b+c$ and $f+h+I(a,c)=g+h+I(b,c)$. As $G$ is a group, we get $a=b$, and thus $f+h+I(a,c)=g+h+I(a,c)$. Now, by using that $F$ is a free monoid, we deduce that $f=g$. Thus, $G\times F$ is cancellative.

Let us see now that $G\times F$ is reduced. If $(a,f)+_I(b,g)=(0,0)$, then $b=-a$ and $f+g+I(a,b)=0$. But $F$ is free, whence $f=g=I(a,-a)=0$. Property (4) of $I$ ensures that $a=0$. Hence $(a,f)=(0,0)=(b,g)$.

Now take any $(a,f)\in G\times F$. Then $\mathrm{ord}(a)(a,f)=(0,\mathrm{ord}(a)f+\sum_{i=1}^{\mathrm{ord}(a)-1} I(a,ia)) \in \langle \{0\}\times Q\rangle$. If $(0,\sum_{q\in Q}\lambda_q q)=(0,\sum_{q\in Q}\mu_q q)$ with $\lambda_q,\mu_q\in \mathbb{N}$  and all zero except finitely many of them, then as $F$ is free over $Q$, this implies that $\lambda_q=\mu_q$ for all $q\in Q$. This proves that $G\times F$ is an inside factorial monoid with basis $Q$.

Let $(a,f),(b,g)\in G\times F$ such that $(a,f)+_I(b,g)=(0,q)$ with $q\in Q$. Then $a+b=0$ and $f+g+I(a,b)=q$. By (4), $I(a,b)\neq q$, and as $F$ is free over $Q$, this implies that $I(a,b)=0$ and that either $f=0$ and $g=q$, or $f=q$ and $g=0$. In any case, by (4) again, $a=0=b$, and $(0,q)$ is an atom.

Now let $(a,0)\in G\times \{0\}$. If $(a,0)= (0,q)+_I(b,f)$ for some $(0,q)\in \{0\}\times Q$ and $(b,f)\in G\times F$, then $a=b$ and $0=q+f$, which is impossible. Thus, $G\times \{0\}\subseteq \Ap(G\times F,\{0\}\times Q)$. For the other inclusion, let $(a,f)\in \Ap(G\times F,\{0\}\times Q)$. If $f\neq0$, then $f-q\in F$ for some $q\in Q$. Hence $(a,f)=(0,q)+_I(a,f-q)\in G\times F$, contradicting that $(a,f)\in \Ap(G\times F,\{0\}\times Q)$. 

Finally assume that (5) holds. We first show that $\mathrm{G}(G\times F)=G\times \mathrm{G}(F)$ with the operation $(a,f)+_I(b,g)=(a+b,f+g+I(a,b))$. The inverse of $(a,f)$ is $(-a,-f-I(a,-a))$. If $(a,f)\in G\times\mathrm{G}(F)$, then $f=g-h$ for some $g,h\in F$, and $(a,f)=(a,g)+_I(-(0,h))$. Hence $(a,f)\in \mathrm{G}(G\times F)$. For the other inclusion, take $(a,f)\in \mathrm{G}(G\times F)$. Then $(a,f)=(b,g)+_I(-(c,h))$ for some $(b,g),(c,h)\in G\times F$, that is, $a+c=b$ and $g=f+h+I(a,c)$. Consequently $(a,f)=(a,g-h-I(a,c))\in G\times\mathrm{G}(F)$.

If for some positive integer $n$ and some $(a,f)\in G\times \mathrm{G}(F)$ we have $n(a,f)\in G\times F$, then $(na,nf+\sum_{i=1}^{n-1}I(a,ia))\in G\times F$. In particular, this implies that $nf+\sum_{i=1}^{n-1}I(a,ia)\in \mathrm{G}(F)$, and by (5), this yields $f\in F$. Thus, $(a,f)\in G\times F$, and $G\times F$ is root-closed.
\end{proof}

\section{The inner class group of a Cale monoid}\label{sec:inner-class}

Let $M$ be a reduced inside factorial monoid with Cale base $Q$. We say that $Q$ is a \emph{tame base} if for every $q\in Q$, there exists a positive integer $\ell(q)$ such that $\ell(q)\lambda(q,x)\in \mathbb{N}$ for all $x\in M$ (see \cite{cale}; we take $\ell(q)$ to be the least positive integer fulfilling this condition). We say that $M$ is a \emph{Cale monoid} if it is inside factorial with a tame base.

By \cite[Lemma 2 (b)]{inside} (and with our additive notation), we know that $\lambda(q,x+y)=\lambda(q,x)+\lambda(q,y)$ for all $x,y\in M$. For each $q\in Q$, the map $f_q(x)=\ell(q)\lambda(q,x)$ defines a monoid morphism of $M$ into $(\mathbb{N},+)$. Recall that by \cite[Lemma 2.1 (i)]{cale}, $f_q(x)=\ell(q)x(q)/n(x)$. 

Let $\varphi: M \to \mathbb{N}^{(Q)}$ defined as $\varphi(x)(q)=f_q(x)$. Then $\varphi$ is injective. If $x,y\in M$ are such that $f_q(x)=f_q(y)$ for all $q\in Q$, then $x(q)/n(x)=y(q)/n(y)$ for all $q\in Q$. Hence $x=\sum_{q\in Q} \frac{x(q)}{n(x)}q= \sum_{q\in Q} \frac{y(q)}{n(y)}q=y$.

Let $\mathrm{G}(M)$ be the quotient group of $M$. Then $\varphi$ extends to the group morphism $\varphi: \mathrm{G}(M)\to \mathbb{Z}^{(Q)}$, by defining $\varphi(a-b)=\varphi(a)-\varphi(b)$. 

For a cancellative monoid $S$ and a submonoid $T$ of $S$, we can define the following binary relation: $a\sim_T b$, $a,b\in S$ if $b-a\in \mathrm{G}(T)$. This relation is a congruence, and thus $S/\sim_T$, denoted as $S/T$, is a monoid, called the quotient of $S$ modulo $T$. It follows that $S/T$ is isomorphic to $\mathrm{G}(S)/\mathrm{G}(T)$. 

The quotient $\mathrm{Cl}(M)=\mathbb{N}^{(Q)}/\varphi(M)$ is the \emph{(outer) class group} of $M$. The \emph{inner clase group} of $M$ is defined as $\mathrm{inCl}(M)=\varphi(M)/\varphi(F)$, with $F=\langle Q\rangle$. 

Hence,
\[
\mathrm{Cl}(M)=\frac{\mathbb{Z}^{(Q)}}{\varphi(\mathrm{G}(M))}, \ \mathrm{inCl}(M)=\frac{\varphi(\mathrm{G}(M))}{\varphi(\mathrm{G}(F))}\cong \frac{M}{F}. 
\]
Thus, by the third isomorphy theorem, 
\begin{equation}
    \mathrm{Cl}(M)=\frac{\mathbb{Z}^{(Q)}/\varphi(\mathrm{G}(F))}{\mathrm{inCl}(M)}.
\end{equation}

From $\varphi(q)=\ell(q)\mathbf e_q$, for $q\in Q$ (where $\mathbf e_q$ is the function that maps everything to 0 except $q$ which is sent to 1), one obtains 
\begin{equation}
    \frac{\mathbb{Z}^{(Q)}}{\varphi(\mathrm{G}(F))}=\bigoplus_{q\in Q}\mathbb{Z}_{\ell(q)}.
\end{equation}

As a consequence of this we obtain the following corollary.

\begin{corollary}\label{cor:prod-q}
Let $M$ be a reduced root-closed Cale monoid with (finite) Cale basis $Q$. Then 
\[
\prod_{q\in Q} \ell(q) = |\mathrm{Cl}(M)|\cdot |\mathrm{inCl}(M)|.
\]
\end{corollary}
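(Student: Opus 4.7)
The plan is to read this directly off the two displayed equations $(1)$ and $(2)$ that precede the corollary, combined with Lagrange's theorem applied to a chain of three abelian groups. Since $Q$ is finite, equation $(2)$ gives us
\[
\left|\mathbb{Z}^{(Q)}/\varphi(\mathrm{G}(F))\right| \;=\; \left|\bigoplus_{q\in Q}\mathbb{Z}_{\ell(q)}\right| \;=\; \prod_{q\in Q}\ell(q),
\]
which in particular is finite. This will be the quantity on the left-hand side that we need to match.

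Next I would observe that we have a chain of subgroups
\[
\varphi(\mathrm{G}(F)) \;\subseteq\; \varphi(\mathrm{G}(M)) \;\subseteq\; \mathbb{Z}^{(Q)},
\]
whose successive quotients are exactly $\mathrm{inCl}(M)$ (the middle-over-bottom, by the definition recalled in the excerpt, using the identification $M/F \cong \mathrm{G}(M)/\mathrm{G}(F)$ and the injectivity of $\varphi$) and $\mathrm{Cl}(M)$ (top-over-middle, by definition). Equation $(1)$ is exactly the statement that $\mathrm{Cl}(M)$ arises as the quotient of the full finite group $\mathbb{Z}^{(Q)}/\varphi(\mathrm{G}(F))$ by the subgroup $\mathrm{inCl}(M)$.

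Applying Lagrange to this tower (equivalently, the third isomorphism theorem as already invoked in the paragraph preceding the corollary) gives
\[
\left|\mathbb{Z}^{(Q)}/\varphi(\mathrm{G}(F))\right| \;=\; \left|\mathbb{Z}^{(Q)}/\varphi(\mathrm{G}(M))\right|\cdot\left|\varphi(\mathrm{G}(M))/\varphi(\mathrm{G}(F))\right| \;=\; |\mathrm{Cl}(M)|\cdot|\mathrm{inCl}(M)|.
\]
Combining with the first display yields the claimed identity. There is essentially no obstacle: the real work was already done in establishing $(1)$ and $(2)$ (the computation $\varphi(q)=\ell(q)\mathbf e_q$ and the structure of $\varphi(\mathrm{G}(F))$ as a diagonal sublattice). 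The only minor point worth mentioning is that finiteness of $\mathrm{Cl}(M)$ and $\mathrm{inCl}(M)$ is automatic from the finiteness of $\prod_{q\in Q}\ell(q)$, so the product on the right-hand side of the corollary makes sense; the root-closed hypothesis plays no extra role beyond what was already used to set up $\varphi$ and the equations $(1)$ and $(2)$.
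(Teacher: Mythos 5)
Your proof is correct and is exactly the argument the paper intends: the corollary is stated as an immediate consequence of the displayed equations (1) and (2), and you simply take cardinalities in the tower $\varphi(\mathrm{G}(F))\subseteq\varphi(\mathrm{G}(M))\subseteq\mathbb{Z}^{(Q)}$ via the third isomorphism theorem. Your remark that finiteness of both class groups follows from the finiteness of $\prod_{q\in Q}\ell(q)$ is a worthwhile small addition that the paper leaves implicit.
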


From Theorem \ref{th:ap-decomp}, we obtain another interesting consequence. 
\begin{corollary}\label{cor:inCl-Ap}
Let $M$ be a reduced root-closed Cale monoid with Cale basis $Q$. Then 
\[
    \mathrm{inCl}(M)\cong (\Ap(M,Q),\oplus)
\]
\end{corollary}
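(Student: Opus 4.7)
The plan is to construct the obvious candidate map $\psi \colon \Ap(M,Q) \to \mathrm{inCl}(M)$ sending $a$ to its class $[a]$ in $\mathrm{G}(M)/\mathrm{G}(F)$, and to verify that it is a group isomorphism. All the real content has already been extracted in Theorem \ref{th:ap-decomp} and Corollary \ref{cor:ap-decomp-GM}, so this will be a matter of unwinding definitions.

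First I would confirm that $(\Ap(M,Q),\oplus)$ is indeed a group: commutativity and associativity of $\oplus$ follow from the same properties of $+$ in $M$ together with the uniqueness in Theorem \ref{th:ap-decomp} (associativity was in fact already observed in the proof of item~(3) of the preceding proposition); the identity is $0$, since the unique decomposition of $a+0$ is $a+0$ itself; and the text has already exhibited $\overline{(n(a)-1)a}$ as the $\oplus$-inverse of $a$.

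Next, I would check that $\psi$ is a homomorphism. By the definition of $\oplus$, for $a,b\in \Ap(M,Q)$ we have $a+b = (a\oplus b) + \mathrm{I}(a,b)$ with $\mathrm{I}(a,b)\in\langle Q\rangle = F$, so the classes of $a+b$ and $a\oplus b$ in $\mathrm{G}(M)/\mathrm{G}(F)$ coincide; consequently $\psi(a)+\psi(b) = [a+b] = [a\oplus b] = \psi(a\oplus b)$.

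Finally, bijectivity is an immediate translation of Corollary \ref{cor:ap-decomp-GM}. That corollary says $\mathrm{G}(M)=\dot\bigcup_{a\in\Ap(M,Q)}(a+\mathrm{G}(Q))$, i.e.\ every coset of $\mathrm{G}(F)=\mathrm{G}(Q)$ in $\mathrm{G}(M)$ has exactly one representative in $\Ap(M,Q)$. Surjectivity of $\psi$ is the existence part: any class in $\mathrm{inCl}(M)$ is represented by some element of $\mathrm{G}(M)$, which lies in $a+\mathrm{G}(Q)$ for some $a\in\Ap(M,Q)$, and hence equals $\psi(a)$. Injectivity is the uniqueness part: if $\psi(a)=\psi(b)$ then $a-b\in\mathrm{G}(Q)$, so $a$ and $b$ lie in the same coset, forcing $a=b$. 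There is no real obstacle here; the only thing to watch is the identification $\mathrm{inCl}(M)=\varphi(\mathrm{G}(M))/\varphi(\mathrm{G}(F))\cong \mathrm{G}(M)/\mathrm{G}(F)$ (using injectivity of $\varphi$ on $\mathrm{G}(M)$, already noted after the definition of $\varphi$), so that the proof can be carried out entirely inside $\mathrm{G}(M)$ rather than in $\mathbb{Z}^{(Q)}$.
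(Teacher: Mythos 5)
Your proof is correct and follows essentially the route the paper intends: the paper offers no written proof beyond ``From Theorem \ref{th:ap-decomp}, we obtain\dots'', and your argument is exactly the natural unwinding of that theorem together with Corollary \ref{cor:ap-decomp-GM} (Ap\'ery elements as a complete set of coset representatives of $\mathrm{G}(F)$ in $\mathrm{G}(M)$, with $\oplus$ being addition of representatives) and the identification $\mathrm{inCl}(M)\cong \mathrm{G}(M)/\mathrm{G}(F)$ already set up in Section \ref{sec:inner-class}. No gaps; the care you take with the injectivity of $\varphi$ on $\mathrm{G}(M)$ and with the group axioms for $\oplus$ is exactly the right amount of detail.
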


Also Theorem \ref{th:ap-decomp} and the last corollary offer a new way to see $M$. We can define on the cartesian product $\Ap(M,Q)\times F$ the following binary operation: $(a,f)+(b,g)=(a\oplus b,f+g+\mathrm{I}(a,b))$. With this operation $\Ap(M,Q)\times F$ is a monoid isomorphic to $M$.

\begin{theorem}
    Let $M$ be a reduced root-closed Cale monoid with basis $Q$. Then $M$ is isomorphic to $\Ap(M,Q)\times \langle Q\rangle$ endowed with the operation $(a,f)+(b,g)=(a\oplus b,f+g+\mathrm{I}(a,b))$.
\end{theorem}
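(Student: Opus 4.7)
The plan is to exhibit an explicit isomorphism $\Phi:\Ap(M,Q)\times \langle Q\rangle\to M$ defined by $\Phi(a,f)=a+f$, where the sum on the right is taken in $M$. Everything needed is essentially packaged already in Theorem \ref{th:ap-decomp} and in the definitions of $\oplus$ and $\mathrm{I}$, so the proof should amount to three short verifications.

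First I would check that $\Phi$ is a bijection. Surjectivity and injectivity are exactly the content of Theorem \ref{th:ap-decomp}: every $x\in M$ lies in a unique coset $a+\langle Q\rangle$ with $a\in \Ap(M,Q)$, so there is precisely one pair $(a,f)\in \Ap(M,Q)\times \langle Q\rangle$ with $x=a+f$. Nothing more is required here.

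Second, I would check that $\Phi$ is a monoid morphism with respect to the operation defined on $\Ap(M,Q)\times \langle Q\rangle$. Using the defining identity $a+b=a\oplus b+\mathrm{I}(a,b)$ (which is how $\oplus$ and $\mathrm{I}$ were introduced after Corollary \ref{cor:two-non-extreme}), one computes
\[
\Phi(a,f)+\Phi(b,g)=(a+f)+(b+g)=(a+b)+(f+g)=(a\oplus b)+\mathrm{I}(a,b)+f+g,
\]
while on the other side
\[
\Phi\bigl((a,f)+(b,g)\bigr)=\Phi\bigl(a\oplus b,\,f+g+\mathrm{I}(a,b)\bigr)=(a\oplus b)+f+g+\mathrm{I}(a,b).
\]
These coincide, so $\Phi$ preserves the operation. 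Finally, $\Phi(0,0)=0$, so the identity element is mapped correctly.

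There is no real obstacle: the difficulty was absorbed into Theorem \ref{th:ap-decomp} (which supplies the bijection and hence makes $\oplus$ and $\mathrm{I}$ well-defined) and into the earlier proposition verifying that $\oplus$ is associative and $\mathrm{I}$ satisfies the cocycle identity (which ensures the operation on $\Ap(M,Q)\times \langle Q\rangle$ is a monoid operation in the first place). The present theorem is the clean packaging of those facts as an isomorphism, and the proof I sketch just unwinds the definitions.
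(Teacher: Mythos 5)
Your proof is correct and is exactly the intended argument: the paper in fact states this theorem without proof, treating it as an immediate repackaging of Theorem \ref{th:ap-decomp} and the definitions of $\oplus$ and $\mathrm{I}$, which is precisely what your map $\Phi(a,f)=a+f$ and the three verifications supply.
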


\begin{remark}\label{rm:ell-ap}
Let $q\in Q$ and $x\in M$. Then by Theorem \ref{th:ap-decomp}, there exist unique $a\in \Ap(M,Q)$ and $f\in \\\langle Q\rangle$ such that $x=a+f$. We know that $\lambda$ is additive in the second argument, and so $\lambda(q,x)=\lambda(q,a)+\lambda(q,f)$. It also follows that $\lambda(q,f)\in \mathbb{N}$ (it is actually $f(q)$), and by Lemma \ref{lem:ap-lambda}, $\lambda(q,a)<1$. Thus if we want to compute $\ell(q)$ we only have to deal with $\lambda(q,a)$ for all $a\in \Ap(M,Q)$. As a consequence of Corollary \ref{cor:inCl-Ap}, we only have to look for least integer $k$ such $k\lambda(q,a)\in \mathbb{N}$ for all $a$ in a minimal generating set of $(\Ap(M,Q),\oplus)$. This integer will be $\ell(q)$.
\end{remark}

\section{The equation}
Let $a_1,\ldots, a_r$ be positive integers. 
Let $N=\{(x_1,\ldots, x_r)\in \NN^r \mid a_1x_1+\dots + a_{r-1}x_{r-1}=a_rx_r\}$. In light of \cite[Proposition 5.6]{cale}, $N$ is a Cale monoid.

Instead of considering $N=\{x\in \NN^r \mid a_1x_1+\cdots+a_{r-1}x_{r-1}=a_rx_r\}$, we will consider $M=\{x\in \NN^{r-1}\mid a_1x_1+\cdots+a_{r-1}x_{r-1}\equiv 0 \pmod {a_r}\}$, which as we see next is isomorphic to $N$. With this rephrasement we can apply the results in \cite{full}.

\begin{lemma}
The semigroups $N$ and ${M}$ are isomorphic.
\end{lemma}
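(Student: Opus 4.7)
The plan is to exhibit an explicit isomorphism by projecting onto the first $r-1$ coordinates. Define $\pi \colon N \to M$ by $\pi(x_1,\ldots,x_{r-1},x_r)=(x_1,\ldots,x_{r-1})$. First I would check that $\pi$ actually lands in $M$: if $(x_1,\ldots,x_r)\in N$ then $a_1x_1+\cdots+a_{r-1}x_{r-1}=a_rx_r$, so the left-hand side is divisible by $a_r$, whence $(x_1,\ldots,x_{r-1})\in M$. Since addition in both $N$ and $M$ is coordinate-wise, $\pi$ is a monoid homomorphism.

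Next I would construct the inverse. For $(x_1,\ldots,x_{r-1})\in M$, the sum $a_1x_1+\cdots+a_{r-1}x_{r-1}$ is a nonnegative multiple of $a_r$, so the integer
\[
x_r \;=\; \frac{a_1x_1+\cdots+a_{r-1}x_{r-1}}{a_r}
\]
lies in $\mathbb{N}$. Define $\psi(x_1,\ldots,x_{r-1})=(x_1,\ldots,x_{r-1},x_r)$ with this $x_r$. Then $\psi$ takes values in $N$ by construction. Because the assignment $(x_1,\ldots,x_{r-1})\mapsto a_1x_1+\cdots+a_{r-1}x_{r-1}$ is $\mathbb{Z}$-linear and $a_r$ is a fixed divisor on $M$, the last coordinate is additive, so $\psi$ is also a monoid homomorphism.

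Finally, the two maps are mutual inverses: the composition $\pi\circ\psi$ is the identity on $M$ by definition, and $\psi\circ\pi$ returns $(x_1,\ldots,x_r)$ from any element of $N$ because the defining relation $a_rx_r=a_1x_1+\cdots+a_{r-1}x_{r-1}$ forces the last coordinate to be recovered correctly. Hence $N\cong M$.

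There is no real obstacle here; the only thing to be careful about is verifying that the recovered last coordinate lies in $\mathbb{N}$ (which uses both the congruence condition defining $M$ and the fact that the numerator is nonnegative) and that $\psi$ is additive in this last coordinate, which is immediate from the linearity of the numerator and the fact that division by $a_r$ distributes over sums of multiples of $a_r$.
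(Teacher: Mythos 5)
Your proof is correct and follows essentially the same route as the paper: projection onto the first $r-1$ coordinates, with the inverse recovering $x_r=\frac{1}{a_r}\sum_{i=1}^{r-1}a_ix_i$ (the paper proves injectivity of $\pi$ directly instead of writing out $\psi$, but this is the same argument). Your explicit formula for the inverse is in fact more carefully stated than the one displayed after the lemma in the paper, which omits the coefficients $a_i$ in the sum.
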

\begin{proof}
Let $\pi:N \to M$ be projection on the first $r-1$ coordinates. Clearly $\pi$ is a monoid epimorphism. If $\pi(x)=\pi(x')$, with $x,x'\in N$, we get $a_rx_r=a_1x_1+\dots+ a_{r-1}x_{r-1} = a_1 x_1'+\dots+ a_{r-1}x_{r-1}'=a_rx_r'$. Hence $a_rx_r=a_rx_r'$ and consequently $x_r=x_r'$. Thus $\pi$ is a monoid isomorphism.
\end{proof}

The inverse of $\pi$ is $\pi^{-1}(x_1,\ldots,x_{r-1})= \left(x_1,\ldots,x_{r-1},\frac{1}{a_r}\sum_{i=1}^{r-1} x_i\right)$.

One of the side effects of this lemma is that ${M}$ is simplicial, with extremal rays $\{{q}_1,\ldots, {q}_{r-1}\}$, with $q_i=\frac{a_r}{\gcd(a_i,a_r)}\mathbf e_i$ for all $i\in\{1,\ldots, r-1\}$ (in this context, $\mathbf e_i$ denotes the $i$th row of the $(r-1)\times(r-1)$ identity matrix).

Now \cite{hoc} can be used in combination with \cite{c-m}. It also follows easily that $M$ is a Cale monoid with basis $Q=\{q_1,\ldots, q_{r-1}\}$. This has also another advantage. We may consider now the $a_i$ modulo $a_r$.

The monoid $M$ is a full affine semigroup, and it is generated by the set of minimal nonzero elements in $M$ with respect to the usual partial ordering in $\mathbb{N}^{r-1}$. This set is known in the literature as a \emph{Hilbert basis} of $M$.

The following result implicitly appears in \cite[Theorem 10]{full}.

\begin{lemma}\label{carac-ap-inter}
Under the standing hypothesis,
\[
\Ap(M,Q)=M\cap \prod\nolimits_{i=1}^{r-1}[0,a_r/\gcd(a_i,a_r)).
\]
\end{lemma}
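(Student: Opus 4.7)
The plan is to unwind the definition $\Ap(M,Q)=M\setminus(Q+M)$ and show that the failure of the membership $x-q_i\in M$ for each extremal ray $q_i$ is exactly the statement that the $i$-th coordinate is bounded by $a_r/\gcd(a_i,a_r)$. Since $Q=\{q_1,\dots,q_{r-1}\}$ is finite, $x\in\Ap(M,Q)$ if and only if $x-q_i\notin M$ for every $i\in\{1,\dots,r-1\}$, so the proof splits into two coordinate-wise checks.

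For the inclusion $\subseteq$, I would take $x\in\Ap(M,Q)$ and argue by contradiction: suppose $x_i\ge a_r/\gcd(a_i,a_r)$ for some $i$. Then $x-q_i\in\mathbb{N}^{r-1}$, and the defining linear form changes by exactly $-a_i\cdot a_r/\gcd(a_i,a_r)=-a_r\cdot a_i/\gcd(a_i,a_r)$, which is a multiple of $a_r$. Hence the congruence defining $M$ is preserved, so $x-q_i\in M$, contradicting $x\in\Ap(M,Q)$. This shows $x_i<a_r/\gcd(a_i,a_r)$ for every $i$.

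For the inclusion $\supseteq$, I would take $x\in M$ with $x_i<a_r/\gcd(a_i,a_r)$ for all $i$ and check directly that $x-q_i\notin\mathbb{N}^{r-1}$ (the $i$-th coordinate becomes negative), hence $x-q_i\notin M$ for every $i$. Therefore $x\notin Q+M$, and $x\in\Ap(M,Q)$.

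There is no real obstacle here; the only point that needs to be recorded clearly is the arithmetic identity $a_i\cdot a_r/\gcd(a_i,a_r)\equiv 0\pmod{a_r}$, which is what makes $q_i$ lie in $M$ and makes translation by $q_i$ preserve membership in $M$. Everything else is a bookkeeping check on coordinates.
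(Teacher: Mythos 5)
Your proof is correct and follows essentially the same route as the paper's: the forward inclusion by contradiction via $x-q_i\in\mathbb{N}^{r-1}$, and the reverse inclusion by noting the $i$-th coordinate of $x-q_i$ would be negative. Your explicit remark that subtracting $q_i$ preserves the congruence modulo $a_r$ is a detail the paper leaves implicit, but it is the same argument.
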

\begin{proof}
Let $x\in \Ap(M,Q)$, and assume that for some $i$, $x_i\ge a_r/\gcd(a_i,a_r)$. Then $x-q_i\in \NN^{r-1}$, but this means that $x-q_i\in M$, contradicting that $x\in \Ap(M, q_i)$.

The other inclusion is clear, since for every $x\in M$ with $x_i<a_r/\gcd(a_i,a_r)$ for all $i$ one cannot have $x-q_i\in M$.
\end{proof}

As a consequence of the above lemma and Theorem \ref{th:ap-decomp}, we obtain the following consequence.

\begin{corollary}\label{cor:ap-decomp-single-eq}
Let $a_1,\ldots, a_r$ be positive integers, with $r>2$. Let $M$ be the set of nonnegative integer solutions of $a_1x_1+\cdots+ a_{r-1}x_{r-1}\equiv 0\pmod {a_r}$. Set $F=\langle \frac{a_r}{\gcd(a_1,a_r)}\mathbf e_1,\ldots, \frac{a_r}{\gcd(a_{r-1},a_r)}\mathbf e_{r-1}\rangle$, and let $A=\Ap(M,Q)$. Then 
\[
M=\bigcup_{a\in A} a+F,
\]
and this union is disjoint.
\end{corollary}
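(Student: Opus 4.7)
The plan is to observe that this is essentially an application of Theorem \ref{th:ap-decomp} to the specific monoid $M$, once we verify its hypotheses in this setting. First I would note that the three key properties of $M$ have already been established in the surrounding text: $M$ is reduced (since $M\subseteq \mathbb{N}^{r-1}$ is unit-free), it is root-closed (being a full affine semigroup and therefore normal, via the embedding of its group as $\{x\in\mathbb{Z}^{r-1}\mid a_1x_1+\cdots+a_{r-1}x_{r-1}\equiv 0 \pmod{a_r}\}$), and it is an inside factorial monoid with Cale basis $Q=\{q_1,\ldots,q_{r-1}\}$ (invoked from \cite[Proposition 5.6]{cale}, together with the explicit description of the extremal rays).

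Next I would identify the free monoid $F$ from the corollary statement with $\langle Q\rangle$: by definition $q_i=\frac{a_r}{\gcd(a_i,a_r)}\mathbf{e}_i$, so the generators listed for $F$ are exactly the elements of $Q$, and since these $\mathbb{Q}$-linearly independent elements generate $F$ freely, $F=\langle Q\rangle$. Applying Theorem \ref{th:ap-decomp} with this base then gives directly
\[
M=\dot\bigcup_{a\in \Ap(M,Q)} a+\langle Q\rangle = \dot\bigcup_{a\in A} a+F,
\]
which is the claimed disjoint decomposition.

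Finally, although not strictly needed for the statement, I would remark that Lemma \ref{carac-ap-inter} additionally guarantees that $A=\Ap(M,Q)$ is finite, since it is contained in the bounded box $\prod_{i=1}^{r-1}[0,a_r/\gcd(a_i,a_r))$. There is no real obstacle here; the work is essentially bookkeeping to connect the abstract hypotheses of Theorem \ref{th:ap-decomp} with the concrete combinatorial description of $M$, $Q$, and $F$ in this section.
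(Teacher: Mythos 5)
Your proposal is correct and matches the paper's own (implicit) argument: the corollary is stated there precisely as a consequence of Theorem \ref{th:ap-decomp} applied to the reduced, root-closed, inside factorial monoid $M$ with Cale basis $Q$, with $F=\langle Q\rangle$. The verification of hypotheses and the identification of $F$ with $\langle Q\rangle$ is exactly the bookkeeping the paper leaves to the reader.
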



\begin{corollary}\label{cor:hb-r+1}
Let $a_1,\ldots, a_r$ be positive integers, with $r>2$. Let $M$ be the set of nonnegative integer solutions of $a_1x_1+\cdots+ a_{r-1}x_{r-1}\equiv 0\pmod {a_r}$. Assume that the Hilbert basis of $M$ is of the form $\{\frac{a_r}{\gcd(a_1,a_r)}\mathbf e_1,\ldots, \frac{a_r}{\gcd(a_{r-1},a_r)}\mathbf e_{r-1}, u, v\}$, for some $u,v$. Then each $x\in M$ has a unique parametrized representation 
\[
x=\sum_{i=1}^{r-1}\lambda_i \frac{a_r}{\gcd(a_i,a_r)}\mathbf e_i + mu+ nv,
\]
with $\lambda_i\in \mathbb N$ and parameters $m,n\in \mathbb{N}$ restricted to 
\[
m u_i +n v_i< \frac{a_r}{\gcd(a_i,a_r)},
\]
for all $i\in\{1,\ldots,r-1\}$.
\end{corollary}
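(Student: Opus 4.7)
The plan is to read the statement directly off Corollary \ref{cor:two-non-extreme} applied to $M$, and then translate the abstract inequality $m\lambda(q,u)+n\lambda(q,v)<1$ into the coordinate form stated here. First I would verify the hypotheses of Corollary \ref{cor:two-non-extreme}: the monoid $M$ is reduced (it sits inside $\mathbb{N}^{r-1}$), root-closed (it is a full affine semigroup, and fullness is exactly root-closedness in additive notation), and inside factorial with tame base $Q=\{q_1,\ldots,q_{r-1}\}$ (already recorded in the paragraphs preceding Lemma \ref{carac-ap-inter}). The elements of $Q$ are strong atoms, since any $x\in M$ with $x\le_M nq_i$ is supported on the $i$-th coordinate, and then the congruence $a_ix_i\equiv 0\pmod{a_r}$ forces $x_i$ to be a nonnegative multiple of $a_r/\gcd(a_i,a_r)$. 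Because the Hilbert basis of $M$ is exactly its set of atoms, the hypothesis of the corollary tells us that $u$ and $v$ are the only atoms of $M$ outside $Q$.

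Applying Corollary \ref{cor:two-non-extreme} we immediately obtain, for each $x\in M$, a unique parametrised representation
\[
x=\sum_{i=1}^{r-1}\lambda_i q_i+mu+nv,\qquad \lambda_i,m,n\in\mathbb{N},
\]
subject to $m\lambda(q_i,u)+n\lambda(q_i,v)<1$ for every $i$.

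The only remaining step is to compute $\lambda(q_i,w)$ for $w\in M$. I would argue directly from the definition: $sq_i\le_M tw$ holds iff $tw-sq_i\in M$. Since $q_i$ is supported only on the $i$-th coordinate and since $a_i\cdot\tfrac{a_r}{\gcd(a_i,a_r)}=\operatorname{lcm}(a_i,a_r)\equiv 0\pmod{a_r}$, the difference $tw-sq_i$ automatically satisfies the congruence mod $a_r$, so the only obstruction is the coordinate inequality $tw_i\ge s\cdot\tfrac{a_r}{\gcd(a_i,a_r)}$. Therefore
\[
\lambda(q_i,w)=\frac{w_i\gcd(a_i,a_r)}{a_r}.
\]
Specialising to $w=u$ and $w=v$, substituting into $m\lambda(q_i,u)+n\lambda(q_i,v)<1$, and clearing the common factor $\gcd(a_i,a_r)/a_r$ yields precisely $mu_i+nv_i<a_r/\gcd(a_i,a_r)$.

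The main obstacle is the $\lambda$-computation, and it is only mildly subtle: one must use fullness of $M$ to rule out any arithmetic obstruction beyond the coordinate inequality, so that the supremum defining $\lambda$ is realised by the largest rational $s/t$ with $tw_i\gcd(a_i,a_r)\ge s\,a_r$. Everything else is a direct appeal to Corollary \ref{cor:two-non-extreme}.
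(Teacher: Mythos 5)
Your proposal is correct and follows essentially the same route as the paper, which simply applies Corollary \ref{cor:two-non-extreme} and then invokes the box description $\Ap(M,Q)=M\cap\prod_{i=1}^{r-1}[0,a_r/\gcd(a_i,a_r))$ from Lemma \ref{carac-ap-inter} and Corollary \ref{cor:ap-decomp-single-eq} to turn $m\lambda(q_i,u)+n\lambda(q_i,v)<1$ into $mu_i+nv_i<a_r/\gcd(a_i,a_r)$. Your explicit computation $\lambda(q_i,w)=w_i\gcd(a_i,a_r)/a_r$ is just a repackaging of the same key observation (subtracting $q_i$ preserves the congruence modulo $a_r$, so the only obstruction is the $i$-th coordinate), and your verification of the hypotheses of Corollary \ref{cor:two-non-extreme} is a reasonable expansion of what the paper leaves implicit.
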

\begin{proof}
Apply Corollary \ref{cor:two-non-extreme} and the particular shape of $A=\Ap(M,Q)$ in Corollary \ref{cor:ap-decomp-single-eq}.
\end{proof}

\begin{remark}
Notice that both Corollaries \ref{cor:two-non-extreme} and \ref{cor:hb-r+1} hold if the Hilbert basis consists in the extreme rays plus an extra element, say $u$. The decomposition and restrictions of the parameters will be the same suppressing the terms in $v$.
\end{remark}

\begin{example}\label{ex-1}
Let $N$ be the set of nonnegative integer solutions of $4x+5y=7z$. Then we know that $M$ is the set of nonnegative integer solutions of $4x+5y\equiv 0 \pmod 7$. The extremal rays of $M$ are $(7,0)$ and $(0,7)$, that is, the Cale base of $M$ is $Q=\{(7,0),(0,7)\}$. Since $M$ is also a full affine semigroup, $\Ap(M,Q)= M\cap [0,6]^2$. The elements in this set are 
\[
\{(0, 0 ), ( 1, 2 ), ( 2, 4 ), ( 3, 6 ), ( 4, 1), ( 5, 3 ), ( 6, 5 )\}.
\]
In order to obtain the Hilbert basis of $M$ it suffices to take $(7,0)$, $(0,7)$ and the minimal nonzero elements of $M\cap [0,6]^2$: $(1,2)$ and $(4,1)$. Thus $H=\{(7,0),(0,7),(1,2),(4,1)\}$ is a Hilbert basis of $M$.


By Corollary \ref{cor:hb-r+1}, with $u=(1,2)$ and $v=(4,1)$, the unique representation of any element $x\in M$ is 
\[
x=\lambda_1(7,0)+\lambda_2(0,7)+m(1,2)+n(4,1),
\]
with $\lambda_1,\lambda_2\in\mathbb N$ and parameters $m,n\in \mathbb{N}$ restricted by 
\[
m+4n<7\hbox{ and } 2m+n<7.
\]
The latter restrictions are equivalent to $(m,n)\in \{0,1,2\}\times \{0,1\} \cup \{(3,0)\}$, and the integers $\lambda_1$,  $\lambda_2$, $n$, $m$ are unique (compare with \cite{elliot}, page 368). If we do not put restrictions on $n$ and $m$, then we loose uniqueness.




Set $q_1=(7,0)$ and $q_2=(0,7)$. As $7(1,2)=(7,0)+2(0,7)$, we obtain $\lambda(q_1,(1,2))=\frac{1}{7}$ and $\lambda(q_2,(1,2))=\frac{2}7$. In a similar way we can compute $\lambda(q_i,a)$ for $i\in\{1,2\}$ and $a\in \Ap(M,Q)$. It follows that $\ell(q_1)=\ell(q_2)=7$. By Corollary \ref{cor:inCl-Ap}, $\mathrm{inCl}(M)$ is isomorphic to $\Ap(M,Q)$ endowed with the operation $\oplus$. This group is isomorphic to $\mathbb{Z}_7$. 

In our setting $\mathbb{Z}^{(Q)}=\mathbb{Z}^2$ and $\varphi:M\to \mathbb{Z}^2$, $\varphi(x)=(7\lambda(q_1,x),7\lambda(q_2,x))$. Also $\varphi(\mathrm{G}(M))=\mathrm{G}(\varphi(M))=\mathrm{G}(\varphi(H))$, so we can describe $\varphi(\mathrm{G}(M))$ by computing the images of the elements in $H$ via $\varphi$ and then taking the group spanned by them. A quick calculation shows that $\varphi((7,0))=(7,0)$, $\varphi((0,7))=(0,7)$, $\varphi((1,2))=(1,2)$, $\varphi((4,1))=(4,1)$, and consequently $\varphi$ is just the inclusion of $M$ inside $\mathbb{N}^2$. The group spanned by $\{(7,0),(0,7),(1,2),(4,1)\}$ equals the group $\mathrm{G}((1,2),(3,-1))$ with invariant factors $1$, $7$. Thus $\mathrm{Cl}(M)\cong \mathbb{Z}_7$.
\end{example}

\section{The two dimensional case}

In this section we focus in the case $ax+by=cz$, or equivalently, $ax+by\equiv 0\pmod c$. We may assume that $\gcd(a,b,c)=1$ (if not, we divide the equation by this amount).

\begin{lemma}\label{ap-two}
Let $c$ be a positive integer and let $a,b\in \{1,\ldots,c-1\}$ with $\gcd(a,b,c)=1$. Let $M$ be the set of nonnegative integer solutions of $a x+ b y\equiv 0  \pmod c$. Let $Q=\{(\frac{c}{\gcd(a,c)},0),(0,\frac{c}{\gcd(b,c)})\}$.Then 
\[
\Ap(M,Q)=\left\{ \left(\gcd(b,c)i, - i d a  \mod\left(\frac{c}{\gcd(b,c)}\right)\right) ~\middle|~ i\in \left\{0,\ldots, \frac{c}{\gcd(a,c)\gcd(b,c)}-1\right\}\right\},
\]
where $d=(b/\gcd(b,c))^{-1}\bmod (c/\gcd(b,c))$.
\end{lemma}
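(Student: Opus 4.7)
The plan is to combine Lemma \ref{carac-ap-inter} (which identifies the Apéry set with the box of elements whose $i$-th coordinate is strictly smaller than $c/\gcd(a_i,c)$) with elementary modular arithmetic. Write $\alpha=\gcd(a,c)$, $\beta=\gcd(b,c)$, $b'=b/\beta$ and $c'=c/\beta$, so the hypothesis $\gcd(a,b,c)=1$ yields $\gcd(\alpha,\beta)=1$ and $\gcd(a,\beta)=1$ (both follow since any common divisor of $\alpha$ and $\beta$, resp.\ of $a$ and $\beta$, divides $\gcd(a,b,c)=1$). Also $\gcd(b',c')=1$ by construction, so $d=(b')^{-1}\bmod c'$ is well defined.

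The first step is to apply Lemma \ref{carac-ap-inter} to reduce the problem to characterizing the pairs $(x,y)\in \mathbb{N}^2$ satisfying
\[
ax+by\equiv 0\pmod c,\qquad 0\le x<\tfrac{c}{\alpha},\qquad 0\le y<c'.
\]
Next, I would observe that the congruence $ax\equiv -by\pmod c$ forces $\beta\mid ax$ (since $\beta$ divides both $by$ and $c$), and then use $\gcd(a,\beta)=1$ to conclude $\beta\mid x$. This lets me write $x=\beta i$ with $i\in\mathbb{N}$; the bound $x<c/\alpha$ translates into $0\le i<c/(\alpha\beta)$, which is exactly the range of $i$ in the statement.

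Substituting $x=\beta i$ into the congruence and dividing by $\beta$ (using $\beta\mid b$ and $\beta\mid c$) produces the equivalent equation $b'y\equiv -ai\pmod{c'}$. Since $\gcd(b',c')=1$, I can multiply by $d$ to obtain $y\equiv -iad\pmod{c'}$, and the bound $0\le y<c'$ pins down $y$ uniquely as $-iad\bmod c'$. This shows that every element of $\Ap(M,Q)$ has the claimed form, and conversely that every pair of the form $(\beta i,\,-iad\bmod c')$ with $0\le i<c/(\alpha\beta)$ does lie in $\Ap(M,Q)$.

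I do not anticipate a serious obstacle: the argument is essentially a parameter count together with the solvability of a linear congruence. The only point that requires care is verifying the two coprimality statements $\gcd(a,\beta)=1$ and $\gcd(b',c')=1$, which is where the normalization $\gcd(a,b,c)=1$ is actually used; once those are in place, the $\beta\mid x$ step and the inversion of $b'$ modulo $c'$ are routine.
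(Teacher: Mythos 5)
Your proposal is correct and follows essentially the same route as the paper: reduce to the box $M\cap[0,c/\gcd(a,c))\times[0,c/\gcd(b,c))$ via Lemma \ref{carac-ap-inter}, deduce $\gcd(b,c)\mid x$ from $\gcd(a,\gcd(b,c))=\gcd(a,b,c)=1$, and solve the resulting congruence modulo $c/\gcd(b,c)$ by inverting $b/\gcd(b,c)$. The only cosmetic difference is that you parametrize $x=\gcd(b,c)\,i$ from the start, whereas the paper works with the first coordinate directly and re-indexes at the end.
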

\begin{proof}
We already know by Lemma \ref{carac-ap-inter} that \[\Ap(M,\{(c/\gcd(a,c),0),(0,c/\gcd(b,c)\})=M \cap [0,c/\gcd(a,c))\times [0,c/\gcd(b,c)).\] 

We are going to prove that 
\[
\Ap(M,\{(c/\gcd(a,c),0),(0,c/\gcd(b,c)\})=\left\{ (i,j) ~\middle|~ \begin{matrix}i\in \{0,\ldots, c/\gcd(a,c)-1\}\cap \gcd(b,c)\mathbb{N},\\  j= - d a (i/\gcd(b,c)) \bmod (c/\gcd(b,c))\end{matrix}\right\},
\]
and then the rest of the proof follows by a change of indices.

Observe that if $(i,j),(k,l)\in M \cap [0,c/\gcd(a,c))\times [0,c/\gcd(b,c))$, with $(i,j)\neq (k,l)$, then $i\neq k$ and $j\neq l$. Assume to the contrary and without loss of generality that $i=k$ and $j>l$. Then $(0,j-l)\in M$, or equivalently $(j-l)b\equiv 0\pmod c$. But then $j-l$ is a multiple of $c/\gcd(b,c)$ in $[0,c/\gcd(b,c))$, which implies $j-l=0$, a contradiction.  

Let us see for which $i\in \{0,\ldots, c/\gcd(a,c)-1\}$ there is $j\in\{0,\ldots, c/\gcd(b,c)-1\}$ such that $a i+b j\equiv 0\pmod c$. The equation in $j$, $bj\equiv -ai \pmod c$, has a solution if and only if $\gcd(b,c)$ divides $-ai$, but $\gcd(\gcd(b,c),a)=\gcd(a,b,c)=1$, and so this occurs if and only if $\gcd(b,c)\mid i$.

Thus, for each $i\in \{0,\ldots, c/\gcd(a,c)-1\}\cap \gcd(b,c)\mathbb{N}$ there exists a unique $j\in \{0,\ldots, c/\gcd(b,c)-1\}$ such that $(i,j)\in M$. 

If $(i,j)\in M$, we already know that $\gcd(b,c)\mid i$. Also there must exist $r\in\mathbb N$ such that $ai+bj=rc$. Dividing by $\gcd(b,c)$, we have $ai/\gcd(b,c)+b/\gcd(b,c)j=rc/\gcd(b,c)$, or equivalently $ai/\gcd(b,c)+b/\gcd(b,c)j\equiv 0 \pmod {c/\gcd(b,c)}$. From here, we obtain that $j$ can be expressed as $j=- (b/\gcd(b,c))^{-1} a i/\gcd(b,c) \bmod (c/\gcd(b,c))$. 
\end{proof}


\begin{theorem}\label{th:two-cl-cyclic}
    Let $c$ be a positive integer and let $a,b\in \{1,\ldots,c-1\}$ with $\gcd(a,b,c)=1$. Let $M$ be the set of nonnegative integer solutions of $a x+ b y\equiv 0  \pmod c$. Then
    \[
    \mathrm{inCl}(M)\cong \mathrm{Cl}(M) \cong \mathbb{Z}_{\frac{c}{\gcd(a,c)\gcd(b,c)}}.
    \]
\end{theorem}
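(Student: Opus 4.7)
Write $g_a = \gcd(a,c)$, $g_b = \gcd(b,c)$ and $m = c/(g_a g_b)$. Since $\gcd(a,b,c) = 1$ forces $\gcd(g_a, g_b) = 1$, one has $g_a g_b \mid c$ and $m \in \mathbb{Z}^+$; Lemma~\ref{ap-two} then gives $\Ap(M,Q) = \{a_0, a_1, \ldots, a_{m-1}\}$ with $a_i = (i g_b, j_i)$, $j_i = -i d a \bmod (c/g_b)$, and so $|\Ap(M,Q)| = m$.

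The plan for the inner class group is to show that $a_1$ generates $(\Ap(M,Q), \oplus)$. Viewed in $\mathbb{Z}^2$, one has $k a_1 = (k g_b, k j_1)$, and the congruence $k j_1 \equiv j_k \pmod{c/g_b}$, immediate from Lemma~\ref{ap-two}, implies that $k a_1 - a_k$ is a nonnegative multiple of $q_2 = (0, c/g_b)$. By the uniqueness in Theorem~\ref{th:ap-decomp}, $\overline{k a_1} = a_k$, so $a_1$ has order $m$, and Corollary~\ref{cor:inCl-Ap} yields $\mathrm{inCl}(M) \cong \mathbb{Z}_m$.

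For the outer class group, Remark~\ref{rm:ell-ap} allows me to compute $\ell(q_1)$ and $\ell(q_2)$ by testing only on the generator $a_1$. From $n(x) x = x(q_1) q_1 + x(q_2) q_2$ with $x = (x_1, x_2) \in M$ one reads off $\lambda(q_1, x) = x_1 g_a/c$ and $\lambda(q_2, x) = x_2 g_b/c$. At $a_1$ this gives $\lambda(q_1, a_1) = 1/m$, so $\ell(q_1) = m$. The key number-theoretic input for $\ell(q_2)$ is $\gcd(j_1, c/g_b) = g_a$: indeed, $d$ is a unit modulo $c/g_b$, and $\gcd(a, c/g_b) = \gcd(a, g_a m) = g_a \gcd(a/g_a, m) = g_a$ since $\gcd(a/g_a, c/g_a) = 1$. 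Writing $j_1 = g_a j_1'$ with $\gcd(j_1', m) = 1$, one gets $\lambda(q_2, a_1) = j_1'/m$ and $\ell(q_2) = m$. Corollary~\ref{cor:prod-q} now forces $|\mathrm{Cl}(M)| = m$.

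Cyclicity is then obtained as follows: the paper's decomposition identifies $\mathbb{Z}^{(Q)}/\varphi(\mathrm{G}(F)) \cong \mathbb{Z}_m \oplus \mathbb{Z}_m$, under which $\varphi(a_1) = (1, j_1')$. By the third isomorphism theorem together with Corollary~\ref{cor:inCl-Ap}, $\mathrm{Cl}(M)$ is the quotient of $\mathbb{Z}_m \oplus \mathbb{Z}_m$ by the cyclic subgroup $\langle (1, j_1') \rangle$, and the surjective homomorphism $(x, y) \mapsto y - j_1' x \bmod m$ has exactly this subgroup as kernel, so $\mathrm{Cl}(M) \cong \mathbb{Z}_m$. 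The main obstacle I anticipate is keeping the modular arithmetic straight, particularly the computation $\gcd(j_1, c/g_b) = g_a$; after that, everything reduces to assembling the structure results already in hand.
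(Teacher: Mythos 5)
Your proof is correct, but it takes a genuinely different (and more self-contained) route than the paper on the class-group half of the statement. The paper obtains $\mathrm{Cl}(M)\cong \mathbb{Z}_{c/(\gcd(a,c)\gcd(b,c))}$ by citing Theorem~1.3 of Chapman--Krause--Oeljeklaus, and then gets $\mathrm{inCl}(M)$ from Corollary~\ref{cor:inCl-Ap} and Lemma~\ref{ap-two}; you instead derive everything internally: you compute $\ell(q_1)=\ell(q_2)=m$ via Remark~\ref{rm:ell-ap} (the key arithmetic fact $\gcd(j_1,c/g_b)=\gcd(a,c/g_b)=g_a$ is correct, since $d$ is a unit modulo $c/g_b$ and $\gcd(g_a,g_b)=1$ gives $g_a\mid c/g_b$), then use Corollary~\ref{cor:prod-q} to pin down $|\mathrm{Cl}(M)|=m$, and finally get cyclicity by presenting $\mathrm{Cl}(M)$ as $(\mathbb{Z}_m\oplus\mathbb{Z}_m)/\langle(1,j_1')\rangle$ via the third isomorphism theorem. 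This buys independence from the external reference at the cost of more modular arithmetic. On the inner class group your write-up is actually more complete than the paper's: the paper asserts cyclicity directly from Lemma~\ref{ap-two}, which only exhibits $\Ap(M,Q)$ as a set of $m$ elements, whereas you supply the missing step that $\overline{k a_1}=a_k$ for $k=0,\dots,m-1$ (since $k a_1-a_k$ is a nonnegative multiple of $q_2$ and Theorem~\ref{th:ap-decomp} gives uniqueness), so $a_1$ generates $(\Ap(M,Q),\oplus)$ --- a point the paper only records later, in the remark following the theorem.
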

\begin{proof}
The equality $\mathrm{Cl}(M)\cong \mathbb{Z}_{\frac{c}{\gcd(a,c)\gcd(b,c)}}$ is a consequence of \cite[Theorem 1.3]{d-half}. By Corollary \ref{cor:inCl-Ap} and Lemma \ref{ap-two}, we get that $\mathrm{inCl}(M)$ is a cyclic group of size $c/(\gcd(a,c)\gcd(b,c))$ and thus it is isomorphic to $\mathrm{Cl}(M)$.
\end{proof}

\begin{remark}
For $a,b,c$ positive integers with $\gcd(a,b,c)=1$ we have that the set of solutions of $ax+by\equiv 0\bmod c$ is a Cale monoid with tame basis $\{q_1=(c/\gcd(a,c),0), q_2=(0,c/\gcd(b,c))\}$. Also by Lemma \ref{ap-two}, the group $\Ap(M,Q)$ (with the operation $\oplus$) is generated by an element of the form $(\gcd(b,c),k)$ with $k$ a positive integer less than $c/\gcd(b,c)$. By Remark \ref{rm:ell-ap}, in order to compute $\ell(q_1)$ and $\ell(q_2)$ we only have to look at the denominators of the rational number $\lambda(q_1,(\gcd(b,c),k))$ and $\lambda(q_2,(1,k))$. From $\frac{c}{\gcd(b,c)}(1,k)=\gcd(a,c)q_1+ k q_2$, $\lambda(q_1,(1,k))=\gcd(a,c)\gcd(b,c)/c=1/(c/\gcd(a,c)\gcd(b,c))$, and thus $\ell(q_1)=c/(\gcd(a,c)\gcd(b,c))$. But now Corollary \ref{cor:prod-q} states that $\ell(q_1)\ell(q_2)=|\mathrm{Cl}(M)||\mathrm{inCl}(M)|$ and we know by Theorem \ref{th:two-cl-cyclic} that the equalities $|\mathrm{Cl}(M)|=|\mathrm{inCl}(M)|=c/(\gcd(a,c)\gcd(b,c))$ hold. Therefore $\ell(q_2)=c/(\gcd(a,c)\gcd(b,c))$. 
\end{remark}

\begin{example}\label{mainexample}
Let us go back to Example \ref{ex-1}, now applying Lemma \ref{ap-two}. We have $a=4$, $b=5$ and $c=7$. Then $b^{-1}\bmod c= 3$. Thus by Lemma \ref{ap-two}, the elements in   $\Ap(M,\{(c,0),(0,c)\})$ are of the form \[(i,-3\times 4\times i \pmod 7)\] for $i\in \{0,\ldots, c-1\}$. For $i=0$, we get $(0,0)$, for $i=1$, we obtain $(1, -12 \bmod 7)=(1,2)$, and so on.

Then to obtain a Hilbert basis we filtered the set obtained, looking for the second coordinate less than the last second coordinate obtained, i.e. as the set obtained using the algorithm is \[\{(0, 7 ), ( 1, 2 ), ( 2, 4 ), ( 3, 6 ), ( 4, 1), ( 5, 3 ), ( 6, 5 ), (7,0)\}.\] We construct the Hilbert basis starting with $\{(0,7),(1,2)\}$, the point $(2,4)$ has second coordinate bigger than 2 (the second coordinate of the last point added, then we do not need it, as well $(3,6)$). However $(4,1)$ has the second coordinate less than 2, so we add it. Finally $(5,3)$ and $(6,5)$ have second coordinate bigger than 1 so we ignore them. Adding $(7,0)$ we construct the Hilbert basis $H=\{(0,7),(1,2),(4,1),(7,0)\}$.  

Note that multiplying by $2\equiv 4^{-1}\bmod 7$ both $a$ and $b$ we obtain the same Ap\'ery set and the same conclusions, that is, the equation $x+3 y\equiv 0\bmod 7$ is equivalent to the above.
\end{example}

Observe that Corollary \ref{cor:ap-decomp-single-eq} and Lemma \ref{ap-two} provide a way to express any solution of $ax+by=c z$ (equivalently $ax+by\equiv 0 \pmod c$) in a unique way as $a+\lambda q_1+\mu q_2$, with $a$ in the Ap\'ery set of $\{q_1,q_2\}$, and $\lambda, \mu \in\mathbb N$. However this is not the approach given by Elliott; he was looking for a (unique) description of the solutions of the form $\gamma_1 a_1+ \cdots + \gamma_t a_t + \lambda q_1 +\mu q_2$, with $\lambda,\mu \in \mathbb N$ and $\gamma_i \subset J_i$, with $J_i$ an interval of nonnegative integers. We gave such a description in Example \ref{ex-1}. Next we present a family of equations where we can provide a ``unique'' description of the solutions in Elliott's sense.

\begin{example}
Consider the equation $a x+ y\equiv 0 \pmod{2 a+1}$ with $a\in\mathbb Z^+$. We have $\gcd(a,2 a+1)=1$. It is no difficult to see that the Hilbert basis is $H=\{(0,2 a+1),(1,a+1),(2,1),(2 a+1,0)\}$, and also
\begin{align*}
\Ap(M,\{(2 a+1,0),(0,2 a+1)\}) = &\{(2i,i) \mid i\in \{0,\ldots,a-1\}\} \\ & \cup\{ (2i+1,a+i+1) \mid i\in\{0,\ldots,a-1\}\}.
\end{align*}


We apply Corollary \ref{cor:hb-r+1} with $u=(1,a+1)$ and $v=(2,1)$. It follows that any solution of $ax+y \equiv 0 \pmod{2 a+1}$ is expressed (uniquely) as $\lambda_1(0,2a+1)+\lambda_2(2a+1,0)+m (1,a+1)+n (2,1)$, with $\lambda_i\in \mathbb{N}$ and $m,n\in \mathbb{N}$ parameters restricted to $m+2a<2a+1$ and $m(a+1)+n<2a+1$. These restrictions are equivalent to $(m,n)\in \{0\}\times[0,a]\cup \{1\}\times[0,a-1]$.
\end{example}
Recall that Example \ref{mainexample} corresponds to the equation $x+ b y\equiv 0\pmod{2 b+1}$ by taking $b=3$; this case is dual to the above example and can be worked out in a similar way.

Another useful algorithm to find the set of minimal solution, or Hilbert basis, for this type of equations can be found in \cite[Figure 3]{FT}, which computes all minimal solutions by means of geometrical tools. The authors called it \textit{slopes algorithm} because it calculates the slopes of the lines containing the minimal solutions.

\end{document}